\newtheorem{thm}{Theorem}[section]
\theoremstyle{definition}
\newtheorem{defn}{Definition}[section]
\theoremstyle{plain}
\theoremstyle{remark}
\newtheorem{rem}{Remark}[section]
\theoremstyle{plain}
\newtheorem{lem}[thm]{Lemma}
\theoremstyle{plain}
\newtheorem{cor}[thm]{Corollary}
\theoremstyle{plain}
\newtheorem{prop}[thm]{Proposition}
\theoremstyle{conjecture}
\newcommand{\R}{\mathbb{R}}
\newcommand{\J}{\mathbb{J}}
\newcommand{\M}{\mathbb{M}}
\newcommand{\X}{\mathbb{X}}
\newcommand{\C}{\mathbb{C}}
\newcommand{\Z}{\mathbb{Z}}
\newcommand{\Q}{\mathbb{Q}}
\newcommand{\norm}[1]{\left\lVert#1\right\rVert}
\begin{document}

\title{On the global moduli of Calabi-Yau threefolds}
\author{Ron Donagi}
\address{Department of Mathematics\\David Rittenhouse Lab.\\
University of Pennsylvania\\
209 South 33rd Street\\Philadelphia, PA  19104-6395}
\email{donagi@math.upenn.edu}

\author{Mark Macerato}
\email{macerato@sas.upenn.edu}

\author{Eric Sharpe}
\address{Department of Physics MC 0435\\
850 Campus Drive\\Virginia Tech\\
Blacksburg, VA  24061}
\email{ersharpe@vt.edu}

\date{\today}

\begin{abstract}
In this note we initiate a program to obtain global descriptions of
Calabi-Yau moduli spaces, to calculate their Picard group, and to identify
 within that group the Hodge line bundle. 
We do this here for several Calabi-Yau's obtained in \cite{DW} as 
crepant resolutions 
of the orbifold quotient of the product of three elliptic curves.
In particular we verify in these cases a recent claim of \cite{KS} by
noting that a power of the Hodge line bundle is trivial -- 
even though in most of these cases the Picard group is infinite.
\end{abstract}

\maketitle

\tableofcontents

\section{Introduction}

Moduli spaces $M_X$ of (complex structures on) a compact Calabi-Yau manifold 
$X$
are central to superstring compactifications, mirror symmetry, conformal
field theory, and numerous other branches of both geometry and physics. They
have the familiar complexity of moduli spaces; in particular, there is a
stacky version as well as an underlying coarse moduli space. The stack is smooth. It is
often easier to understand, and we will deal primarily with it.

While a lot is known about the local structure of these moduli spaces, there
are surprisingly few examples
where the global geometry is fully understood, and much of this is for
moduli spaces of low complex dimensions, one or two.

Of particular interest is an understanding of the Hodge line bundle $\lambda$ on
$M_X$, whose fiber above the (isomorphism class of) a particular $X$ is the line
$H^0(X,\omega_X)$ of top holomorphic forms on $X$. Some startling predictions
have appeared in recent physics literature \cite{KS}, 
to the effect that 
Calabi-Yau moduli spaces $M_X$ always admit a (globally defined)
K\"ahler potential.
Our results verify these predictions in all cases that we consider.
In fact, we show that the Hodge line bundle is not trivial, 
but some finite power of it is trivial. 
This implies the existence of a global K\"ahler potential.

Over $M_X$ there is a universal family $\pi_X: \X \to M_X$ whose fiber above
the isomorphism class $[X]$ of some $X$ is that $X$ itself. The intermediate
Jacobians $J(X)$ fit into a family $\J_X \to M_X$. In \cite{DM} it is shown
that the pull back $\widetilde{\J_X} \to \widetilde{\M_X}$ is an analytically
completely integrable system. Here $\widetilde{\M_X}$ is the space of pairs
$(X,\alpha)$ where $X$ is a complex structure on a Calabi-Yau 
of a given deformation
class, and $\alpha$ is a holomorphic volume form on $X$. In other words,
$\widetilde{\M_X}$ is the complement in $L_X$ of the zero section. This system
is integrable analytically but not algebraically: the fibers are polarized
complex tori, but the polarization is not positive definite, so the fibers
are not abelian varieties. (Instead, the polarization is Lorentzian, with $h^{2,1}$ positive
directions and a single negative direction corresponding to $h^{3,0}$.)
Physically, this corresponds to the fact that $\widetilde{\J_X}$ approximates,
in a large limit, the Ramond moduli space of the theory. 
The new prediction implies that
$\widetilde{\M_X}$ is just the product $M_X \times {\mathbb C}^*$
(up to a finite cover or a more general covering with constant transition functions), 
and the symplectic form on the integrable system $\widetilde{\J_X}$ over it 
splits globally into horizontal and vertical summands.
(When $X$ is not necessarily compact, 
it is possible for the $\J_X$ to be abelian varieties; this happens
if the negative direction, i.e. a 
holomorphic volume form, coincides with a vanishing cycle.
In fact \cite{DDP}, for each Riemann surface $C$ and each ADE group $G$ it is
possible to construct a family of non-compact Calabi-Yau threefolds whose 
${\J_X}$ recovers the Hitchin system $H_{C,G}$. 
These results have been extended recently to include 
the non-simply laced groups BCFG \cite{Beck}.)

It therefore seems worthwhile to construct some global moduli spaces $M_X$ and
to test the physics predictions for them. In the case that $X$ is an elliptic
curve, the results are well known to mathematicians, and have been
summarized for physicists in \cite{S}. The purpose of this work is to
describe some global moduli spaces $M_X$ and to determine their Hodge bundles
$\lambda$ for some genuine, three-dimensional compact
Calabi-Yaus. The moduli spaces we
consider  themselves will also be three dimensional.

The examples we consider are 
crepant resolutions $\overline{X}$ of
orbifolds \[
X:= Y/G
\]
of the product $Y:= E_1\times E_2\times E_3$
of three elliptic curves by the action of a finite group $G$. The latter
contains the subgroup 
\[
G_S \subset  G
\]
 of its `shifts' or translations, 
with a  quotient group 
\[
G_T:=G/G_S.
\]
Elements of $G_T$ are called twists. 
We consider the case where $G_T$ is isomorphic to 
${\mathbb Z}/2 \times {\mathbb Z}/2$,
acting (up to translations) by nontrivial elliptic involutions (sign
changes) on an even number of the $E_i$. It turns out \cite{DW} that all
such groups $G$, and orbifolds $X$, can be described explicitly.

In that work, a particular class of such group actions was designated
`essential'.
It was shown that any orbifold $X$ of this type is isomorphic to one whose
group is essential.
Essential groups were shown to be abelian,
isomorphic to the product $G_S \times G_T$ of their shift and twist parts,
and all their non-trivial elements are of order 2.
So, essentially, the shift group $G_S$ must be a subgroup of the group of
points of order 2 in $Y$, acting by translations. It is therefore isomorphic
to $({\mathbb Z}/2)^r$ for some $r$, $0 \leq r \leq 6$. This $r$ is called
the rank of $G$.
The full essential group $G$ is then isomorphic to $({\mathbb Z}/2)^{r+2}$.

All such orbifolds have been classified in \cite{DW}. 
They fall into 35 types. 
Most of these are known to be in distinct families, but as explained in \cite{DW} and reviewed in our appendix,
there are a few gaps where the possible existence of isomorphisms is still not known. In fact, one pair from the 
list in \cite{DW} has since been shown to be isomorphic, cf. \cite{FRTV13}.
After explaining the required notation, we recall that classification and comment on its current state in the Appendix, in Table~\ref{table:summary-dw}.

In this work we focus on ten of the families in this table, the ones whose moduli spaces are three
dimensional. 
For each of these there is a canonical identification:
\begin{equation}\label{product}
H^3(X,\Q) \cong \otimes_{1=1,2,3}H^1(E_{i},\Q).
\end{equation}
The best known example, denoted (0-1) in \cite{DW}, was
originally studied by \cite{VW95}. In that case the group action has 48
fixed points, leading to singularities of $X$ that need to be resolved (by a
crepant resolution). There are four cases where the group action is free,
leading to smooth quotients of Hodge numbers $(3,3)$. In \cite{DW} these are
denoted (0-4), (1-5), (1-11), and (2-12). (The notation ($r$-$i$) means that the
group $G_S$ has rank $r$, and that this is the $i$-th case listed in 
\cite{DW} with
that given $r$.) There are six further cases,
including the \cite{VW95} orbifold (0-1),
 where the group does have some
fixed points but the number of complex moduli happens to still be $h^{2,1}=3$. 
We tabulate
these ten orbifolds with $h^{2,1} = 3$, giving their symbol from [DW09], their
Hodge numbers,
whatever alternative descriptions are available, and some references where
they are either analyzed or used:
\begin{center}
\begin{tabular}{ccl}
(0-1) & (51,3) & The basic Vafa-Witten orbifold \cite{VW95}, also a
Borcea-Voisin \\
& & BV(18, 4, 0) \cite{Bor97,Voi93}. \\
(0-4) & (3,3) & Occurs in \cite{Ig,Ue,OS, BCDP,T} \\
(1-1) & (27,3) & A $({\mathbb Z}/2)$ free quotient of the basic Vafa-Witten
orbifold \\
(1-5) & (3,3) & A $({\mathbb Z}/2)$ free quotient of (0-4) \cite{DW,T} \\
(1-11) & (3,3) & Another $({\mathbb Z}/2)$ free quotient of (0-4)
\cite{DW,T} \\
(2-1) & (15,3) & A $({\mathbb Z}/2)$ free quotient of (1-1)\\
(2-9) & (27,3) & An orbifold of the $SO(12)$ torus, related to B$_{NAHE+}$
free fermion model \\
& & \cite{Fa92, Fa93, DF04, DW}\\
(2-12) & (3,3) & A $({\mathbb Z}/2)$ free quotient of (1-11) \cite{DW,T} \\
(3-5) & (15,3) & A $({\mathbb Z}/2)$ free quotient of (2-9) \\
(4-1) & (15,3) & Related to `enhanced' B$_{NAHE+}$ free fermion model
\cite{Fa92, Fa93, DF04, DW}
\end{tabular}
\end{center}

Notice that any projective crepant resolution 
of any of these is a genuine $\mathcal{N}=1$ Calabi-Yau threefold, 
in the sense that its $h^{1}$ vanishes and it has precisely a one-dimensional
space of holomorphic three-forms or covariantly constant spinors.
For the four cases with Hodge numbers $(3, 3)$,
the holonomy is a proper subgroup of $SU(3)$,
in fact a finite subgroup.
The remaining cases, where $h^{2,1} = 3, h^{1,1} > 3$, 
involve some blowups, and their holonomy is all of $SU(3)$.

Below we give a global description of the moduli space $M_{X}$  in each case in which $h^{2,1} = 3$. In doing so, we will also describe a connected component $M_{\overline{X}}$ (isomorphic to $M_X$) of the moduli space of complex structures on a particular resolution ${\overline{X}}$. What we will find is that $M_X$ is isomorphic to a global quotient of $\mathbb{H}^3$, where $\mathbb{H}$ is the complex upper half plane. We will prove that the Picard group of $M_X$ is a finitely generated abelian group, and calculate the rank of its free abelian part in each case. Despite the fact that $\text{Pic}(M_X)$ will often be infinite, we will prove that the Hodge bundle has finite order in each of these cases. In fact, we will calculate an explicit trivialization of a tensor power of the bundle, and compute the K\"ahler potential on $M_X$.

\newpage

\section{The moduli spaces}

\vskip .5cm

\subsection{Notation and Terminology} \label{notation}
\begin{eqnarray*}
V &:=& \mbox{a fixed, 2-dimensional vector space over the field of 2 elements}\\
S &:=& SL(V) \cong SL(2,{\mathbb Z}/2), \mbox{ acting linearly on } V.\\
B_i \subset S &:=& \mbox{a Borel subgroup, stabilizer of a non-zero vector in } V, \ i = 1,2,3.\\
E_i &:=&  \mbox{elliptic curves}, \  i=1,2,3. \\
E_i[2] &:=& \mbox{the subgroup of points of order 2 in } E_i\\
Y & := & E_1\times E_2\times E_3 \\
Y[2] &:=& \mbox{the subgroup of points of order 2 in } Y\\
l_i&:=&  \mbox{a level 2 structure on  } E_i,  \mbox{ i.e. an isomorphism } 
l_i:= V \stackrel{\sim}{\to} E_i[2] \\
l_{\mbox{ }} &:=& \Pi_{i=1}^3 l_i : V^{\oplus 3} \stackrel{\sim}{\to} Y[2], \mbox{ the induced level 2 structure on Y}\\
M(2) & := & \mbox{moduli stack of elliptic curves with level 2 structure} \\
M(4) & := & \mbox{moduli stack of elliptic curves with level 3 structure} \\
M & := & \mbox{moduli stack of elliptic curves} =M(2) /S \\
\Gamma&:=& SL(2,\Z)\\
A&:=& (\R/\Z)^2 \rtimes \Gamma \\
A^{(3)} & := & A^3/S_3 \\
G_{max} &:=& \mbox{Ker } ((V \times \Z/2)^3 \to ({\mathbb Z}/2)^3 \to {\mathbb Z}/2)\\
G_{\mbox{ }}&:=& \mbox{a subgroup of } G_{max} \mbox{ mapping onto Ker}  \left( ({\mathbb Z}/2)^3 \to {\mathbb Z}/2 \right)\\
G_{S} &:=& \mbox{Ker} \left(G \to ({\mathbb Z}/2)^3 \subset ({\mathbb Z}/2)^3 \right) 
\subset V^{\oplus 3}, \mbox{ the `shift' part of } G\\
G_{T} &:=& \mbox{a subgroup of $G$ mapping isomorphically to } G/G_S,
\mbox{the `twist' part of } G\\
H & := & N_{A^{(3)}}(G) \\
H' & := & H/G \\
H_{max} & := & (\Z/4)^6 \rtimes \Gamma^3 \rtimes S_3 \leq A^{(3)}, \mbox{where } (\Z/4)^6 \leq (\R/\Z)^6 \mbox{ is the set of points of order 4}.\\
\mathbb{H} & := & \mbox{Complex upper half plane} \\
\mathcal{U} &:=& \mbox{A family of elliptic curves } (\mathbb{H} \times \C)/\Z^2 \mbox{ over } \mathbb{H} \\
X &:=& Y/G \\
M_X^{+} & := & \mbox{moduli stack of complex structures on }X. \mbox{ (Its dimension is $h^{2,1}(X)$.)} \\
M_X & := & \mbox{The connected component of } M_X^{+} \mbox{ containing the actual quotients} \  X = Y/G. \\
\overline{X}&:= &  \mbox{a Calabi-Yau threefold, crepant resolution of }X. \\
M_{\overline{X}} & := & \mbox{moduli stack of the Calabi-Yau
threefolds $\overline{X}$ (Its dimension is again $h^{2,1}(X)$)}. \\
& & \mbox{It is a 
covering of $M_X$, 
the finite fiber parametrizing K\"ahler resolutions $\overline{X}$} \\
& & \mbox{of
a given $X$. This is the space we care about!}\\
M_{\overline{X},\text{central}} &:=& \mbox{A connected component of $M_{\overline{X}}$}.
\end{eqnarray*}

Here are some remarks regarding our terminology. Given a complex algebraic variety $X$, there is a moduli stack $M^{\text{alg}}_X$ parameterizing deformations of $X$ over pointed schemes (seperated and of finite type) over $\C$. More precisely, a deformation of $X$ over $(S, s_0)$ is a flat and proper morphism $f: \mathcal{X} \rightarrow S$, together with an isomorphism $\phi: f^{-1}(s_0) \cong X$. Note that we do not require the data of a polarization on our varieties (that is, a fixed ample invertible sheaf $\mathcal{L}$ on $\mathcal{X}$), so standard results about the existence of a quasi-projective coarse moduli scheme \cite{Vie95} would not be available. When $X$ is a smooth compact Calabi-Yau variety, the Tian-Todorov theorem (\cite{Ti87}, \cite{To89}) implies that $X$ has unobstructed deformations, and hence $M^{\text{alg}}_X$ is smooth. In the cases of this paper, $X$ will be Calabi-Yau with singularities in at worst codimension 2, in which case a result of Ran \cite{Ra93} implies that $M^{\text{alg}}_X$ is still smooth. 

The stack $M^{\text{alg}}_X$ determines an underlying analytic stack (see \cite{BN05}, section 3, for basic definitions regarding complex analytic stacks), which we will refer to as $M_X$ for the rest of this paper. For example, the moduli stack $M$ of elliptic curves over $\C$ has an underlying analytic stack, which is the quotient $[\mathbb{H}/SL(2, \Z)]$. Our goal is to provide a similar global description of $M_X$ and ultimately $M_{\overline{X}}$, for $\overline{X}$ a smooth crepant resolution of $X$. 

\subsection{More details: the spaces and the maps between them} \label{details}

\vskip .5cm

The moduli spaces we consider fit into a sequence of maps:
$$
\begin{tikzcd}
& M_{\overline{X}, \text{central}} \ar[d] \ar[hookrightarrow]{r} & M_{\overline{X}} \ar[ld]\\ 
\mathbb{H}^3 \ar[r, "f"] & M_{\overline{X}}
\end{tikzcd}
$$
We will describe each of these spaces and the maps between them.\\
$\mathbb{H}$ denotes the complex upper half plane. Over $\mathbb{H}$, there is a family $\mathcal{U}$ of elliptic curves
$$
\mathcal{U} = (\mathbb{H} \times \mathbb{C})/\Z^2
$$
together with a global section $s: \mathbb{H} \rightarrow \mathcal{U}$ given by $s(\tau) = (\tau,0)$. (Recall that an elliptic curve $(E,0)$, often abbreviated to just $E$, is a smooth genus 1 curve $E$
with a marked point $0 \in E$.) The fiber of $\mathcal{U}$ over a point $\tau \in \mathbb{H}$ is the complex torus $\C/\Lambda_{\tau}$ with the origin $s(\tau)$. Here, $\Lambda_{\tau}$ is the lattice in $\C$ generated by $1$ and $\tau$. If $M$ denotes the moduli stack of elliptic curves, the family $\mathcal{U}$ defines a map $\mathbb{H} \rightarrow M$. In fact, as is well known, $M$ is the stacky quotient of $\mathbb{H}$ by the group
$$
\Gamma = \text{SL}(2,\Z),
$$
which acts on $\mathbb{H}$ by fractional linear transformations. Note that over the space $\mathbb{H}^3$ we have the family $\mathcal{U}^3 \rightarrow \mathbb{H}^3$ of abelian varieties, which will be cruical for later developments. 

Throughout the paper, $G$ refers to an abelian group isomorphic to $({\mathbb Z}/2)^{r+2}$,
$0 \leq r \leq 6$, acting on the product $Y = E_1 \times E_2 \times E_3$ of three elliptic curves $\{E_i\}_{i=1,2,3}$ according to
one of the 36 entries in Table 1 of \cite{DW}.  $G$ is given as an extension
\begin{displaymath}
\xymatrix{
1 \ar[r] & G_S \ar[r] & G \ar[r] & G_T \ar[r] & 1,
}
\end{displaymath}
where the subgroup $G_S \approx ({\mathbb Z}/2)^r$ of ``shifts'' acts
on $Y$ by translation, so $G_S$ can be identified with a subgroup of
$Y[2]$, while each nontrivial element of the group $G_T \approx 
({\mathbb Z}/2)^2$ (of ``twists'') acts, modulo some translations, as 
inversion $y_i \mapsto - y_i$ on two of the three $E_i$.\\

Now, fix a group $G$ as in Table 1 of \cite{DW}. $G$ acts naturally on the family $\mathcal{U}^3 \rightarrow \mathbb{H}^3$ (acting trivially on the base). Therefore, we have an induced family of complex orbifolds
$$
\eta: \mathcal{U}^3/G \rightarrow \mathbb{H}^3.
$$
The fiber of $\eta$ over the triple $(\tau_i)_{i=1}^3$ is the toroidal orbifold
$$
\left(\prod_{i=1}^{3} \C/\Lambda_{\tau_i}\right)/G.
$$
Since $X$ appears as the fiber of $\eta$ over some triple $(\tau_1,\tau_2,\tau_3) \in \mathbb{H}^3$, $\eta$ determines an analytic map $f: \mathbb{H}^3 \rightarrow M_X$.  We will refer to the family $\mathcal{U}^3/G$ as $f^*\X$.

Finally, we let $\overline{X}$ denote any Calabi-Yau resolution of the
possibly singular $X$, and let $M_{\overline{X}}$ be the moduli space
of complex structures on $\overline{X}$. There is a forgetful map $M_{\overline{X}} \rightarrow M_X$, whose degree is equal to the number of K\"ahler resolutions of $X$. 

\vskip .5cm

\subsection{The plan} We will construct a discrete group $H'$ acting on $\mathbb{H}^3$. By lifting this action to $\mathcal{U}^3/G$, we will factor the map $f$ through a map $\bar{f}: [\mathbb{H}^3/H'] \rightarrow M_X$. We will then prove that $\bar{f}$ induces an isomorphism between $[\mathbb{H}^3/H']$ and the moduli stack $M_X$. Then, we will analyze the Picard groups of $M_X$ and $M_{\overline{X}}$, and study the Hodge bundle $\lambda$ in particular. 

\vskip .5cm

\section{Automorphisms of $f^*\X$}

\subsection{Group actions} In this section we will identify a group $H'$ acting on both $f^*\X$ and $\mathbb{H}^3$. It is this action that will induce the map $\bar{f}$ mentioned in section 2.3.

Firstly, the group $T^2 = (\R/\Z)^2$ acts on $\mathcal{U}$ by translations. To be precise, let $\epsilon = (\epsilon_0, \epsilon_1) \in T^2$ and $(\tau, z) \in \mathcal{U}$. Then 
$$
\epsilon \cdot (\tau, z) := (\tau, z + \epsilon_0 + \epsilon_1 \tau).
$$
As we have mentioned, there is an action of $\Gamma$ on $\mathbb{H}$. If $\gamma \in \Gamma$ is given by 
$$
\gamma = \begin{pmatrix}
a & b \\
c & d
\end{pmatrix},
$$
then
$$
\gamma \cdot \tau = \frac{a\tau + b}{c\tau + d}.
$$
This action of $\Gamma$ can be lifted to an action on $\mathcal{U}$ by defining
$$
\gamma \cdot (\tau, z) = \left(\frac{a\tau + b}{c \tau + d}, (c\tau + d)^{-1} z \right).
$$
This action of $\Gamma$ on $\mathcal{U}$ normalizes the previous action of $T^2$, in the sense that if $\gamma \in \Gamma$ and $\epsilon \in T^2$, then there exists an $\epsilon' \in T^2$ such that for every $(\tau, z) \in \mathcal{U}$ we have 
$$
\gamma \cdot \epsilon \cdot \gamma^{-1} \cdot (\tau, z) = \epsilon' \cdot (\tau, z).
$$ 
(Explicitly, $\epsilon' = a \epsilon_0  - b \epsilon_1 , - c \epsilon_0  +  d \epsilon_1  $.)
Therefore $\Gamma$ acts on $T^2$ by sending $\epsilon$ to $\epsilon'$, and we may form the extension
$$
\begin{tikzcd}
1 \arrow[r] & T^2 \arrow[r] & A \arrow[r] & \Gamma \arrow[r] &  1
\end{tikzcd}
$$
as a semi-direct product $A = T^2 \rtimes \Gamma$. The previous actions of $T^2$ and $\Gamma$ on $\mathcal{U}$ are now combined into a single action of $A$. \\

Now, $A^3$ acts diagonally on $\mathcal{U}^3$. There is also an $S_3$ action on $\mathcal{U}^3$ by permutation of the three factors, as well as an $S_3$ action on $A^3$, so we form the extension
$$
\begin{tikzcd}
1 \arrow[r] & A^3 \arrow[r] & A^{(3)} \arrow[r] & S_3 \arrow[r] &  1.
\end{tikzcd}
$$
So $A^{(3)}$ acts on $\mathcal{U}^3$, combining all of the previous actions. Since the group $G$ acts on $\mathcal{U}^3$ by translations of points of order 2 and elliptic inversions, it embeds naturally into $A^{(3)}$. Therefore we may form the normalizer
$$
H := N_{A^{(3)}}(G),
$$
and subsequently the quotient
$$
H' := H/G.
$$
Since $H$ normalizes $G$, its action on $\mathcal{U}^3$ descends to an action on $f^*\X$. We remark that since $G$ acts trivially on $\mathbb{H}^3$ and $f^*\X$, $H'$ acts on both of these spaces. We denote the respective actions of $H'$ on $\mathcal{U}^3$ and $f^*\X$ by $a$ and $\widetilde{a}$. 

\begin{prop}
Let $h = (\epsilon, \gamma, \sigma)$ denote an element of $A^{(3)}$, where $\epsilon \in T^6$, $\gamma \in \Gamma^3$, and $\sigma \in S_3$. Then if $h \in H$, we must have $\epsilon \in (\Z/4)^6$, the subgroup of points of $T^6$ of order dividing 4. 
\end{prop}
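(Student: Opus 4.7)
The plan is to probe $h=(\epsilon,\gamma,\sigma)\in H$ by conjugating a nontrivial twist element of $G$ and reading off what it forces on $\epsilon$. First I would recall from the definition of $G_{\max}$ in the notation section that every element of $G$ has translation part in $V^{\oplus 3}\cong (\Z/2)^6\subset T^6$. In particular, for each nontrivial $\tau\in G_T$, any lift $g=(\delta,\tau,1)\in G$ satisfies $\delta\in(\Z/2)^6$.

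The main step is to compute $hgh^{-1}$ directly in $A^{(3)}=(T^2\rtimes\Gamma)^3\rtimes S_3$. Its $S_3$-component is $\sigma\cdot 1\cdot\sigma^{-1}=1$. The key observation is that each entry $\tau_i\in\{\pm I\}$ is \emph{central} in $SL(2,\Z)$, so the $\Gamma^3$-component of the conjugate equals $\sigma\tau\sigma^{-1}$ independently of $\gamma$. Unwinding the semidirect product then produces the translation part
\[
(1-\sigma\tau\sigma^{-1})\,\epsilon \;+\; \gamma\,\sigma(\delta).
\]
Because $\Gamma$ preserves the 2-torsion subgroup of $T^2$ and $\sigma$ only permutes factors, $\gamma\sigma(\delta)\in(\Z/2)^6$ automatically; and the whole translation part must lie in $(\Z/2)^6$ because $hgh^{-1}\in G$. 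I would therefore conclude that $(1-\sigma\tau\sigma^{-1})\epsilon\in(\Z/2)^6$.

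To finish, I would interpret this condition factor by factor. On a factor $i$ where $(\sigma\tau\sigma^{-1})_i=-I$, the operator $1-\sigma\tau\sigma^{-1}$ acts as multiplication by $2$, so $2\epsilon_i\in(\Z/2)^2$, i.e.\ $\epsilon_i\in(\Z/4)^2$; on the fixed factor it vanishes and gives no constraint. Now $G_T\cong(\Z/2)^2$ has three nontrivial elements, each inverting a different pair of the $E_i$, so those three twists collectively invert every one of the three factors; the same holds for the $\sigma$-conjugate family $\{\sigma\tau\sigma^{-1}\}$, which again exhausts $G_T\setminus\{1\}$ because $h$ normalizes $G$ and hence its twist quotient. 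Running the previous step over all three twists therefore forces $\epsilon_i\in(\Z/4)^2$ for every $i=1,2,3$, i.e.\ $\epsilon\in(\Z/4)^6$. The only real subtlety will be the bookkeeping around the permutation $\sigma$ and the multiple nested semidirect products; once the centrality of $\pm I$ in $SL(2,\Z)$ is exploited, the argument reduces to this short factor-by-factor check.
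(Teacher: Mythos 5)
Your proposal is correct and follows essentially the same route as the paper: conjugate a twist element $g=(\delta,\tau,1)$ by $h$, observe that the translation component of $hgh^{-1}$ picks up $2\epsilon_i$ on each factor inverted by the (conjugated) twist, and force that component to be $2$-torsion, then run over the three nontrivial twists to cover all factors. The only cosmetic difference is that the paper first reduces to $\sigma=1$ by a preliminary conjugation, whereas you carry $\sigma$ along and invoke the centrality of $-I$; the substance is identical.
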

\begin{proof}
Let $g \in G$. For $hgh^{-1}$ to belong to $G$, its $T^6$ component must be a point of order 2. We can write $g \in A^{(3)}$ as $(\delta, \iota, 1)$ where $\iota$ is a pure twist ($-I$ on an even number of factors) and $\delta$ has order 2. Firstly, we observe that $(0,1,\sigma)g(0,1,\sigma^{-1})$ also has the form $(\delta', \iota', 1)$ for $\delta'$ a point of order 2 and $\iota'$ a pure twist. Therefore, we assume without loss of generality that $\sigma = 1$. Then we have
$$
hgh^{-1} = (\epsilon, \gamma, 1)(\delta, \iota,1)(-\gamma^{-1} \cdot \epsilon, \gamma,1).
$$
By assumption on $G$, there is a component of $\iota$ acting by $-I$. Referring to this component as the $i$th, we will have that the $i$th component of the above element is
$$
-(-\epsilon_i) + \delta_i + \epsilon_i = \delta_i + 2\epsilon_i.
$$
As we mentioned, this element must have order 2. Since $\delta_i$ has order 2, it follows that $\epsilon_i$ has order dividing 4.
\end{proof}

Let $H_{max}$ denote the $A^{(3)}$ subgroup given by $((\Z/4)^2 \rtimes \Gamma)^3 \rtimes S_3$. The previous proposition shows that $H \leq H_{max}$ in all cases, hence the name. 

Let $A(2)$ denote the subset of $A$ of elements of the form $(\epsilon, \gamma)$, where $\epsilon$ has order 2 and $\gamma \in \Gamma(2)$.

\begin{prop}
$A(2)^3$ is a normal subgroup of $H_{max}$, isomorphic to the direct product $(\Z/2)^6 \times \Gamma(2)^3$. Furthermore, $A(2)^3$ is always contained in $H$.
\end{prop}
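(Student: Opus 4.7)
The plan is to verify each assertion---the direct product structure, normality in $H_{max}$, and inclusion in $H$---by short semi-direct-product computations. The key observation throughout is that $\Gamma(2)$, being the kernel of $\Gamma \twoheadrightarrow SL(2,\Z/2)$, acts trivially on the 2-torsion subgroup $(\Z/2)^2 \subset T^2$ while still preserving the 4-torsion $(\Z/4)^2$; combined with the centrality of $-I$ in $\Gamma$, this single fact will do essentially all of the work.

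The direct product statement is immediate: since $\Gamma(2)$ acts trivially on $(\Z/2)^2$, the semi-direct product $A(2) = (\Z/2)^2 \rtimes \Gamma(2)$ collapses to a direct product $(\Z/2)^2 \times \Gamma(2)$, and taking cubes gives $A(2)^3 \cong (\Z/2)^6 \times \Gamma(2)^3$. For normality in $H_{max} = ((\Z/4)^2 \rtimes \Gamma)^3 \rtimes S_3$, the $S_3$-component just permutes the three $A$-factors and so preserves $A(2)^3$, so it suffices to check that $A(2)$ is normal in $(\Z/4)^2 \rtimes \Gamma$ one factor at a time. Direct computation using the standard semi-direct product formulas yields
\[
(\epsilon',\gamma')(\epsilon,\gamma)(\epsilon',\gamma')^{-1} = \bigl(\epsilon' + \gamma'\cdot\epsilon - (\gamma'\gamma\gamma'^{-1})\cdot\epsilon',\ \gamma'\gamma\gamma'^{-1}\bigr),
\]
whose $\Gamma$-component lies in $\Gamma(2)$ by the classical normality $\Gamma(2) \trianglelefteq \Gamma$. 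In the torus component, $\gamma'\cdot\epsilon$ remains 2-torsion because $\gamma'$ acts $\Z$-linearly, and writing $\gamma'\gamma\gamma'^{-1} = I + 2M$ for some integer matrix $M$ shows that the remaining contribution equals $-2M\epsilon'$, which is 2-torsion because $2\epsilon'$ is.

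For $A(2)^3 \subseteq H = N_{A^{(3)}}(G)$, I will verify the stronger statement that $A(2)^3$ in fact centralizes $G$. Write any $g \in G$ as $(\delta,\iota) \in A^3$ with $\delta \in V^{\oplus 3}$ an order-two shift and $\iota$ a pure twist, and any $h \in A(2)^3$ as $(\epsilon,\gamma)$. The same multiplication rule gives
\[
hgh^{-1} = \bigl(\epsilon + \gamma\cdot\delta - (\gamma\iota\gamma^{-1})\cdot\epsilon,\ \gamma\iota\gamma^{-1}\bigr),
\]
which collapses to $g$ via three simplifications: (a) $\gamma \in \Gamma(2)^3$ acts trivially on 2-torsion, so $\gamma\cdot\delta = \delta$; (b) the factors $\pm I$ making up $\iota$ are central in $\Gamma$, so $\gamma\iota\gamma^{-1} = \iota$; (c) $\iota\cdot\epsilon = \pm\epsilon = \epsilon$ coordinate-wise, since each coordinate of $\epsilon$ is 2-torsion. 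Hence $hgh^{-1} = (\delta,\iota) = g$, and $A(2)^3 \subseteq H$ unconditionally on the choice of $G$.

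There is no real obstacle here: the only thing requiring care is bookkeeping in the nested semi-direct products and keeping straight how $\Gamma$ acts on the 2- and 4-torsion subgroups of $T^2$. The content of the proposition is essentially the slogan that $\Gamma(2)$ is the ``act as the identity on 2-torsion'' subgroup, and everything in sight is 2-torsion.
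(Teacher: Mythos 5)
Your proof is correct and follows essentially the same route as the paper's: the direct product structure comes from $\Gamma(2)$ acting trivially on $2$-torsion, normality in $H_{max}$ is a direct conjugation computation in $(\Z/4)^2\rtimes\Gamma$ using $\Gamma(2)\trianglelefteq\Gamma$ and the $4$-torsion of $\epsilon'$, and the inclusion in $H$ follows from the stronger fact that $A(2)^3$ centralizes $G$ (the paper phrases this as $G$ lying in the center of $A(2)^3$). The only cosmetic difference is that you perform the conjugation in a single formula where the paper splits it into conjugation by the $\Gamma$-part and by the translation part.
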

\begin{proof}
Firstly, we must verify that $A(2)$ is a subgroup of $A$. We note that for any $(\tau, z) \in \mathcal{U}^3$ and $\gamma \in \Gamma(2)$, $\epsilon \in (\Z/2)^2$, an easy calculation shows that
$$
\gamma \cdot \epsilon \cdot \gamma^{-1} \cdot (\tau, z) = (\tau, z + \epsilon).
$$
Therefore, $A(2)$ is indeed closed under the product. Furthermore, the subgroups $(\Z/2)^2$ and $\Gamma(2)$ embed naturally into $A(2)$, and the above calculation shows that they commute. Since they clearly generate $A(2)$ and intersect trivially, we have shown that $A(2) \cong (\Z/2)^2 \times \Gamma(2)$. For normality, we begin by checking that $A(2)$ is normal in $(\Z/4)^2 \rtimes \Gamma$. let $\gamma' \in \Gamma$, $\epsilon' \in (\Z/4)^6$, and $\gamma \in \Gamma(2)$, $\epsilon \in (\Z/2)^6$. Firstly,
$$
(\gamma', 0) (\gamma, \epsilon)(\gamma'^{-1}, 0) = (\gamma'\gamma\gamma'^{-1}, \gamma' \cdot \epsilon).
$$
By the normality of $\Gamma(2)$ in $\Gamma$, $\gamma'\gamma\gamma'^{-1} \in \Gamma(2)$. Furthermore, $\gamma'$ must send order 2 points to order 2 points. Therefore, $\Gamma$ normalizes $A(2)$. Next, we consider
$$
(1, \epsilon')(\gamma, \epsilon)(1,-\epsilon') = (\gamma, - \gamma \cdot \epsilon' + \epsilon + \epsilon' ).
$$
Then $2(\gamma \cdot \epsilon' + \epsilon + \epsilon') = (\gamma \cdot (2\epsilon') + 2 \epsilon' )$. Since $\gamma \in \Gamma(2)$, it fixes points of order 2. Hence this expression is equal to $4\epsilon' = 0$. So we have established that $A(2)$ is normal in $(\Z/4)^2 \rtimes \Gamma$. Since $A(2)^3$ is clearly preserved by permutation, it follows that $A(2)^3 \unlhd H_{max}$, as claimed. Since every element of $\Gamma(2)$ fixes points of order 2 in $T^2$ by conjugation, we see that $G$ lies in the center of $A(2)^3$. In particular, $A(2)^3 \leq H$. 
\end{proof}

Based on the previous proposition, we can define a \textit{finite} group $L = H/A(2)^3$. $L$ is a subgroup of $L_{max} := ((\Z/2)^2 \rtimes S)^3 \rtimes S_3$, where $S = \text{SL}(2,\Z/2)$ was introduced earlier as the quotient $\Gamma/\Gamma(2)$.

\subsection{Computation of $L$} In this section we explain how to compute the finite group $L$, and record the results in all cases in which $h^{2,1}(X) = 3$. The technique is straightforward. Since $L$ is by definition the image of $H$ under the quotient map $H_{max} \rightarrow L_{max}$, we need to determine which elements $l \in L_{max}$ admit a lift  $\widetilde{l} \in H_{max}$ which normalizes $G$.  Note that if $\widetilde{l}$ and $\widetilde{l}'$ are two distinct lifts of $l$, then $\widetilde{l}$ normalizes $G$ if and only if $\widetilde{l}'$ normalizes $G$. This result follows since $\widetilde{l}^{-1}\widetilde{l}' \in A(2)^3$, which commutes with $G$. 

Therefore, one simply needs to iterate over the elements of $L_{max} = (\Z/2)^6 \rtimes S^3 \rtimes S_3$, choose a lift for each element, and determine whether or not this element normalizes $G$. Since $L_{max}$ and $G$ are finite, this algorithm can be very easily implemented on a computer. To subsequently determine $H$, one then finds the preimage of $L$ in $H_{max}$. 

We will not need a complete description of $L$ for the main results of the paper. The translations by points of order 2, given by $(\Z/2)^6$, form a normal subgroup of $L_{max}$. Let $p: L_{max} \rightarrow S^3 \rtimes S_3$ be the quotient map, and $L_0 := p(L) \leq S^3 \rtimes S_3$. Since $L_0$ is the group that we will need to know to compute the Picard group, we will record it in all cases. We will also record the group $H$ when it has a particularly simple form. We illustrate the relationships between the groups introduced so far in the following diagram.
$$
\begin{tikzcd}
N_{H_{max}}(G) = H \ar[hookrightarrow]{r} \ar[twoheadrightarrow]{d} & H_{max} \ar[twoheadrightarrow]{d} = (\Z/4)^6 \rtimes \Gamma^3 \rtimes S_3 \\
L \ar[hookrightarrow]{r} \ar[twoheadrightarrow]{d} & L_{max} = H_{max}/A(2)^3 \ar[twoheadrightarrow]{d} \cong (\Z/2)^6 \rtimes S^3 \rtimes S_3 \\
L_0  \ar[hookrightarrow]{r} & S^3 \rtimes S_3
\end{tikzcd}
$$
We fix some notation. $S$ acts faithfully on the vector space $V = (\mathbb{F}_2)^2$, so we will use this action to refer to elements of $S$. Namely, we label the nonzero elements of $V$ as $\{\frac{1}{2},\frac{\tau}{2}, \frac{1 + \tau}{2}\}$. There is an isomorphism $S \cong S_3$, as $S$ acts to permute these three nonzero elements. We let $B_i \leq S$ be the stabilizer of the $i$th element. The subgroup $\widetilde{B}_i \leq S^3$ refers to the set of triples with one identity element, and two elements of $B_i$. $S \leq S^3$ refers to the diagonal subgroup. We let $\Gamma_i$ and $\widetilde{\Gamma}_i$ refer to the preimages of $B_i$ and $\widetilde{B}_i$ under the quotient $\Gamma \rightarrow S$. The results, which we explained how to obtain earlier, are given in all cases with $h^{2,1} = 3$ except (4-1) by Table 1. \\

\begin{table}[ht]
    \centering
    \begin{tabular}{c|c|c}
        Case & $L_0 = p(L)$ & $H$ \\
        \hline
        (0-1) & $S^3 \rtimes S_3$ & $(\Z/2)^6 \rtimes \Gamma^3 \rtimes S_3$ \\
        (0-4) & $B_1^3 \rtimes S_3$ & $-$ \\
        \hline
        (1-1) & $B_2^3 \rtimes S_3$ & $(\Z/2)^6 \rtimes \Gamma_2^3 \rtimes S_3$ \\
        (1-5) & $\widetilde{B}_2 \rtimes S_3$ & $-$\\
        (1-11) & $(B_2^2 \times B_1) \rtimes \langle (1 \ 2)\rangle $ & $-$ \\
        \hline
        (2-1) & $S \rtimes S_3$ & $(\Z/2)^6 \rtimes \Gamma \rtimes S_3$\\
        (2-9) & $B_1^3 \rtimes S_3$ & $(\Z/2 \times \Z/4)^3 \rtimes \Gamma_1^3 \rtimes S_3$\\
        (2-12) & $(1 \times B_1^2) \rtimes \langle (2 \ 3)\rangle$ & $-$\\
        \hline
        (3-5) & $\widetilde{B}_1 \rtimes S_3$ & $(\Z/2 \times \Z/4)^3 \rtimes \widetilde{\Gamma} \rtimes S_3$
    \end{tabular}
    \caption{The group $L_0$ in cases with $h^{2,1} = 3$}
    \label{tab:my_label}
\end{table}
For case (4-1), $L_0$ is not a semi-direct product. We will note that $L_0$ fits into an extension
$$
\begin{tikzcd}
1 \ar[r] & N \ar[r] & L_0 \ar[r] & S_3 \ar[r] & 1,
\end{tikzcd}
$$
where $N \leq S^3$ has the property that each projection $\pi_i: S^3 \rightarrow S$ induces an isomorphism $N \cong S$. However, $N$ is not the diagonally embedded copy of $S$ in $S^3$.

\section{Global geometry of $M_X$}

We have described a map $f: \mathbb{H}^3 \rightarrow M_X$. In order to obtain a map $\bar{f}: [\mathbb{H}^3/H'] \rightarrow M_X$, we must lift the action of $H'$ on $\mathbb{H}^3$ to an action on the family $f^*\X$. Of course, we have also done this by defining the action $\widetilde{a}$. Our objective now is to prove that $\bar{f}$ is an isomorphism. The strategy will be to show that $\bar{f}$ is proper and \'etale, and then prove that it has degree 1. We begin with a few lemmas. 

\begin{lem}   \label{prop:pairwise:lifts}
Assume that three elliptic curves $E_i$ are pairwise non-isogenous, $Y$ is their product,  
$\pi: Y \to X:= Y/G$ the quotient by a $G$-action on $Y$, and
$\pi': Y' \to X'$ another quotient in the same family (i.e. with same $G$).
Then any isomorphism $X \to X'$ lifts
to an isomorphism $Y \to Y'$.
\end{lem}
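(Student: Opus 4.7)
The plan is to lift $\phi$ via the orbifold universal cover, extend by Hartogs, and descend using the pairwise non-isogeny hypothesis to identify the translation lattices defining $Y$ and $Y'$.

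Since each nontrivial twist in $G_T$ fixes at most a one-dimensional subvariety of $Y$, the singular locus of $X$ has codimension at least two. Let $X^{\mathrm{sm}} \subset X$ be the smooth locus, so $\phi$ restricts to an isomorphism $X^{\mathrm{sm}} \to X'^{\mathrm{sm}}$. The orbifold universal cover of $X$ is $\mathbb{C}^3$, carrying the affine action of $\Lambda := \pi_1^{\mathrm{orb}}(X)$, which sits in an exact sequence $1 \to \pi_1(Y) \to \Lambda \to G \to 1$, with $\pi_1(Y) \cong \mathbb{Z}^6$ the lattice yielding $Y$; similarly for $X'$. After choosing base points and lifting, $\phi$ gives a biholomorphism $\tilde{\phi}$ between open subsets of $\mathbb{C}^3$ obtained by removing a codimension-two analytic subset. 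Hartogs' extension theorem then promotes $\tilde{\phi}$ to a biholomorphism $\tilde{\phi} \colon \mathbb{C}^3 \to \mathbb{C}^3$.

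Conjugation by $\tilde{\phi}$ intertwines $\Lambda$ with $\Lambda' := \pi_1^{\mathrm{orb}}(X')$, and to descend $\tilde{\phi}$ to a map $\psi \colon Y \to Y'$ it suffices to show that $\tilde{\phi} \cdot \pi_1(Y) \cdot \tilde{\phi}^{-1} = \pi_1(Y')$. This is where pairwise non-isogeny is used decisively. Among normal sublattices of $\Lambda$ whose quotient is a compact complex torus, the lattice $\pi_1(Y)$ is distinguished by the requirement that $\mathbb{C}^3 / \pi_1(Y)$ is isomorphic to the specific product of three pairwise non-isogenous elliptic curves $E_1, E_2, E_3$. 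Because $\mathrm{Aut}(E_1 \times E_2 \times E_3) = \prod_i \mathrm{Aut}(E_i)$ contains no permutation of factors---a property that would fail precisely if two $E_i$ were isogenous---this characterization pins down $\pi_1(Y)$ up to composition with such product automorphisms. The same reasoning applies to $\pi_1(Y') \subset \Lambda'$. Since $\phi$ matches the isogeny classes on the two sides, $\tilde{\phi}$ must carry the one lattice to the other.

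Descending $\tilde{\phi}$ to the quotients then yields a biholomorphism $\psi \colon Y \to Y'$, which is algebraic by GAGA and lifts $\phi$ by construction. The main obstacle is the intrinsic characterization of $\pi_1(Y) \subset \Lambda$ via the isomorphism type of the quotient abelian variety; it is precisely the rigidity of $\mathrm{Aut}(E_1 \times E_2 \times E_3)$ under pairwise non-isogeny that eliminates the freedom to replace $\pi_1(Y)$ by a competing sublattice giving an isomorphic but distinctly embedded product. Without the hypothesis---e.g., if $E_1 \cong E_2$---permutation automorphisms of the product would yield multiple distinct candidate lattices with isomorphic quotients, breaking the identification step and hence the descent.
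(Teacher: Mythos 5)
Your route is genuinely different from the paper's: the paper identifies the three elliptic fibrations $X \to (E_i\times E_j)/G$ and uses pairwise non-isogeny to show these are the \emph{only} genus-one fibrations on $X$, so that any isomorphism $X \to X'$ must match them up and hence respect the product structure upstairs. Your covering-space strategy (lift $\phi$ over the smooth locus to the universal cover $\mathbb{C}^3\setminus S$ of $X^{\mathrm{sm}}$, extend by Hartogs, and descend) is a reasonable alternative, and the first half of it is sound. The problem is the descent step, which is where all the content lives, and your argument for it has a genuine gap.

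Concretely, your claim that ``among normal sublattices of $\Lambda$ whose quotient is a compact complex torus, $\pi_1(Y)$ is distinguished by the requirement that $\mathbb{C}^3/\pi_1(Y)\cong E_1\times E_2\times E_3$'' is false as stated: the sublattice $2\,\pi_1(E_1)\oplus\pi_1(E_2)\oplus\pi_1(E_3)$ is normal in $\Lambda$ (the linear parts of $\Lambda$ are diagonal $\pm1$ matrices) and its quotient torus is again isomorphic to $E_1\times E_2\times E_3$, so the isomorphism type of the quotient does not pin down the lattice. What actually needs proving is that conjugation by $\tilde\phi$ carries $\pi_1(Y)$ onto $\pi_1(Y')$ \emph{inside} the translation subgroups $\Lambda_S\supseteq\pi_1(Y)$ (the preimage of $G_S$), which are what Bieberbach-type rigidity gives you; when $G_S\neq 1$ these strictly contain $\pi_1(Y)$, and you never address which index-$|G_S|$ sublattice of $\Lambda'_S$ you land on. The repair is to observe that pairwise non-isogeny forces the coordinate decomposition $\mathbb{C}^3=\bigoplus_i\mathbb{C}_i$ to be intrinsic, and that \emph{essentiality} of $G$ (no element of $G_S$ is a translation supported on a single factor, the reduction from \cite{DW}) gives $\Lambda_S\cap\mathbb{C}_i=\pi_1(E_i)$, so that $\pi_1(Y)=\bigoplus_i(\Lambda_S\cap\mathbb{C}_i)$ is characterized inside $\Lambda$ and is preserved by $\tilde\phi$. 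You never invoke essentiality, but without it the statement can fail, and the paper's proof uses it explicitly at the corresponding point. A smaller issue: your appeal to $\mathrm{Aut}(E_1\times E_2\times E_3)$ containing no factor permutations is beside the point --- the lift $Y\to Y'$ is allowed to permute factors (and must be, whenever $\phi$ interchanges the fibrations); what non-isogeny buys is uniqueness of the product decomposition, not absence of permutations.
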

\begin{proof}
Note that if $\{i,j,k\}$ is a permutation of $\{1,2,3\}$, 
the action of $G$ on $Y$ induces an action on
$E_i \times E_j$, and $X$ maps to the quotient 
$(E_i \times E_j)/G$. The generic fiber is isomorphic to $E_k$. Indeed,
any element of $G$ that acts trivially on $E_i \times E_j$ must be a shift, 
in $G_S$. But the reduction explained
in \cite{DW} allows us to assume that the group $G$ of automorphisms of $X$ 
is essential, or non-redundant,
meaning that it does not contain a translation by a nonzero $x \in E_k$.

So our $X$ has three elliptic fibrations, 
which are distinct because the fibers $E_1, E_2, E_3$ are 
assumed non-isogenous. Further, these are the  {\it only} 
elliptic fibrations on $X$: in fact, their lifts
to $Y$ are the only elliptic fibrations there. 
To see this, consider a genus 1 fibration $\pi: Y \to B$, 
and let $E$ be a generic fiber. 
Note that the fixed locus of elements of $G$ is at most one-dimensional, 
so $\pi^{-1}$ of the generic $E$ does not meet the fixed locus. 
The inverse image $\pi^{-1}(E)$ of $E$ in $E_1\times E_2\times E_3$ 
is then an unramified cover of $E$, 
so each of its connected components $E'$ is itself an elliptic curve in 
$E_1\times E_2\times E_3$. 
The assumption about non-isogeny of the $E_i$ implies that $E'$ 
can map onto at most one of the $E_i$. 
It follows that $E'$ is isomorphic to one of the $E_i$ 
and is embedded in the expected way: parallel to one of the three coordinates. 
So the same holds for $E$.

The isomorphism $X \to X'$ must therefore take an elliptic fibration of $X$ 
with fiber $E_i$ to 
an elliptic fibration of $X'$ with the same fiber $E_i$. 
It therefore lifts to an isomorphism 
$Y \to Y'$ as claimed.
\end{proof} 

\begin{rem}
Note that the isomorphism $Y \rightarrow Y'$ obtained at the end of the theorem above is only a complex isomorphism, \textit{not} an isomorphism of abelian varieties. That is, it may fail to map the origin of $Y$ to that of $Y'$. It is precisely for that reason that the group $H_{max}$, and hence $H$, contains a translation component $(\Z/4)^6$. 
\end{rem}

\begin{lem}
Assume that three elliptic curves $E_i$ are pairwise non-isogenous, $Y$ is their product,  
$\pi: Y \to X:= Y/G$ the quotient by a $G$-action on $Y$, and
$\pi': Y' \to X'$ another quotient in the same family (i.e. with same $G$).
Let $\phi: X \to X'$ be an isomorphism. Furthermore, assume that we have fixed isomorphisms $\psi_1: \eta^{-1}(\tau) \cong X$ and $\psi_2: \eta^{-1}(\tau') \cong X'$, for some $\tau, \tau' \in \mathbb{H}^3$. Then there exists a unique $h' \in H'$, such that $$
\widetilde{a}(h') \vert_{\eta^{-1}(\tau)} \circ \psi_1 = \psi_2 \circ \phi.
$$
That is, the restriction of the action of $h'$ on $f^*\X$ to $\eta^{-1}(\tau)$ induces the isomorphism $\phi$.
\end{lem}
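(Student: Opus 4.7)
The plan is to apply Lemma~\ref{prop:pairwise:lifts} to lift $\phi$ to a complex isomorphism $\widetilde{\phi}\colon Y \to Y'$, and then to realize $\widetilde{\phi}$ as the fiber over $\tau$ of the action on $\mathcal{U}^3$ of an element of $H \subset A^{(3)}$, uniquely determined modulo $G$. Throughout, I would identify $Y$ and $Y'$ with $\eta^{-1}(\tau)$ and $\eta^{-1}(\tau')$ via $\psi_1$ and $\psi_2$.

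First I would extract the combinatorial data of $\widetilde{\phi}$. Pairwise non-isogeny of the $E_i$ forces any complex isomorphism of such triple products to permute the three factors by a uniquely determined $\sigma \in S_3$, so $\widetilde{\phi}$ decomposes as $\sigma$ followed by factorwise isomorphisms $\widetilde{\phi}_i \colon E_{\tau_{\sigma(i)}} \to E_{\tau'_i}$. Each $\widetilde{\phi}_i$ lifts to an affine map $z \mapsto \alpha_i z + \beta_i$ on $\C$ with $\alpha_i \Lambda_{\tau_{\sigma(i)}} = \Lambda_{\tau'_i}$, where $\alpha_i$ is intrinsic and $\beta_i$ is well-defined modulo $\Lambda_{\tau'_i}$. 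These conditions determine a unique $\gamma_i \in \Gamma$ with $\gamma_i \cdot \tau_{\sigma(i)} = \tau'_i$ and $\alpha_i = (c_i \tau_{\sigma(i)} + d_i)^{-1}$, together with a translation $\epsilon_i \in T^2$ whose image $\epsilon_{i,0} + \epsilon_{i,1} \tau'_i$ in $\C/\Lambda_{\tau'_i}$ equals $\beta_i$. Packaging these data yields $h := (\epsilon, \gamma, \sigma) \in A^{(3)}$ whose action on $\mathcal{U}^3$ restricts to $\widetilde{\phi}$ on the fiber over $\tau$.

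Next I would verify $h \in H = N_{A^{(3)}}(G)$. For $g \in G$, the element $hgh^{-1}$ acts trivially on the base $\mathbb{H}^3$ (since $g$ does) and hence preserves every fiber; its restriction to the fiber over $\tau'$ is $\widetilde{\phi} \circ g|_Y \circ \widetilde{\phi}^{-1}$, which equals the fiber action of some $g' \in G$ because $\widetilde{\phi}$ conjugates the $G$-action on $Y$ to the $G$-action on $Y'$. The rigidity observation I would invoke is that any element of $A^{(3)}$ acting trivially on $\mathbb{H}^3$ has the form $(\eta, \pm I^3, 1)$, and its fiber action $z_i \mapsto \pm z_i + \eta_i$ over a single point determines both the signs and the $\eta_i \in T^2$. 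Thus $hgh^{-1} = g'$ as elements of $A^{(3)}$, so $h$ normalizes $G$.

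For uniqueness in $H'$, I would consider $h_1, h_2 \in H$ both realizing $\phi$ and set $k := h_1^{-1} h_2$. Then $k$ fixes $\tau$ and descends to the identity on $X$, so there is $g \in G$ with $k|_Y = g|_Y$. Writing $k g^{-1} = (\eta, \delta, \rho) \in A^{(3)}$, pairwise non-isogeny forces $\rho = 1$ and each $\delta_i \in \mathrm{Stab}_\Gamma(\tau_i)$, while the identity action on the fiber over $\tau$ forces $(c_i \tau_i + d_i)^{-1} = 1$; the non-reality of $\tau_i$ then gives $c_i = 0$, $d_i = 1$, whence $\delta_i = I$ and $\eta_i = 0$. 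Thus $k = g \in G$ and $h_1 \equiv h_2 \pmod{G}$. I expect the main technical hurdle to be the rigidity step in paragraph three: the global statement that $h$ normalizes $G$ must be reduced to a single-fiber check, which in turn rests on the faithfulness of the fiber-preserving subgroup of $A^{(3)}$ on any one fiber.
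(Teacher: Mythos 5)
Your proposal is correct and follows essentially the same route as the paper: lift $\phi$ to $Y$ via Lemma~\ref{prop:pairwise:lifts}, package the resulting permutation, factorwise isomorphisms and translations into an element $h=(\epsilon,\gamma,\sigma)\in A^{(3)}$ inducing $\phi$, and prove uniqueness modulo $G$ by showing that an element acting trivially on one fiber must lie in $G$. Your normalization check is in fact spelled out more carefully than the paper's one-line version, and the only step you assert without proof (that $k$ covering $\mathrm{id}_X$ forces $k|_Y=g|_Y$ for a single constant $g\in G$) is exactly the continuity-plus-finiteness argument the paper supplies.
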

\begin{proof}
By the above lemma, we have an element $\sigma \in S_3$ together with complex isomorphisms $\phi_i: E_i \xrightarrow{\sim} E_{\sigma(i)}$, such that $\widetilde{\phi} = \prod_{i=1}^3 \phi_i$ induces $\phi$. Any isomorphism of two elliptic curves has a lift to an automorphism of the universal cover, $\C$. Since any automorphism of $\C$ is an affine transformation $z \mapsto az + b$, we can find a unique point $\epsilon_i \in T^2$ such that $\epsilon_i^{-1} \circ \phi_i$ is origin preserving. In that case, $\epsilon_i^{-1} \circ \phi_i$ is an isomorphism of elliptic curves, and is therefore induced uniquely by the action of some $\gamma \in \Gamma$ on the universal family $\mathcal{U}$ (since $M \cong [\mathbb{H}/\Gamma]$). Therefore, the action of $h = (\epsilon, \gamma, \sigma) \in A^{(3)}$ on $f^*\X$ induces the isomorphism $\phi$. Since the action of $h$ descends to $f^*\X = \mathcal{U}^3/G$, we must have $h \in N_{A^{(3)}}(G) = H$.

Now for uniqueness. An element $h$ will act trivially on a fiber of $f^*\X$ if and only if it belongs to $G$. To see this latter claim, assume that $h = (\epsilon, \gamma, \sigma)$ acts trivially on the fiber $X \cong \eta^{-1}(\tau) \subset f^*\X$. The action of $h$ on $X$ comes from its action on $Y$, a fiber of $\mathcal{U}^3$. Since $h$ acts trivially on $X$, we must have for any $y \in Y$ that there exists a $g_y \in G$ such that $g_y \cdot y = h \cdot y$. Since $h \cdot y$ depends smoothly on $y$, so must $g_y \cdot y$. Since $G$ is finite, we then have $g_y = g$, a constant. But then $hg^{-1}$ acts trivially on $Y$. This clearly implies that $hg^{-1} = 1$, and $h \in G$. Taking $h'$ to be the image of $h$ in $H'$ then proves the claim. 
\end{proof}
Let $O \subset \mathbb{H}^3$ be the subset of triples $(\tau_i)_{i=1}^{3}$ with the propery that the three elliptic curves $E_{\tau_i} = \C/\Lambda_{\tau_i}$ are non-isogeneous. $O$ is a dense subset of $\mathbb{H}^3$. Since $O$ is stable under the action of $H'$, its image $\mathcal{O}$ in the quotient $[\mathbb{H}^3/H']$ is then a dense open substack.

\begin{thm}
The map $\bar{f}: \mathcal{O} \rightarrow M_X$ is a monomorphism of stacks.
\end{thm}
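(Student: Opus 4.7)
The plan is to verify that $\bar{f}$ is fully faithful as a functor of groupoids over every analytic test space $T$, which is equivalent to being a monomorphism of stacks. First I would reduce to the case of local trivializations of the $H'$-torsors encoding $T$-points of $\mathcal{O}$: locally on $T$, a $T$-point of $\mathcal{O}$ becomes a holomorphic map $\alpha: T \to O \subset \mathbb{H}^3$, and an isomorphism between two such $\alpha_1, \alpha_2$ is an element $h' \in H'$ with $\alpha_2 = h' \cdot \alpha_1$. Its image under $\bar{f}$ is the isomorphism $\widetilde{a}(h')$ of the pulled-back families $\alpha_i^*(f^*\X)$.

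With this reduction in place, I would show that the map $h' \mapsto \widetilde{a}(h')$ is a bijection onto the set of $M_X(T)$-isomorphisms between $\alpha_1^*(f^*\X)$ and $\alpha_2^*(f^*\X)$. The critical input is the preceding lemma, which supplies the fiberwise bijection: over each $t \in T$ the fibers of $\alpha_1$, $\alpha_2$ lie in $O$, so the elliptic curves involved are pairwise non-isogenous, and every isomorphism $X_{\alpha_1(t)} \xrightarrow{\sim} X_{\alpha_2(t)}$ is induced by a unique element of $H'$. Injectivity of $h' \mapsto \widetilde{a}(h')$ is then immediate: if two elements of $H'$ induce the same isomorphism at a single fiber, the lemma forces them to coincide.

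For surjectivity, given a $T$-isomorphism $\phi: \alpha_1^*(f^*\X) \xrightarrow{\sim} \alpha_2^*(f^*\X)$, I would apply the lemma fiberwise to produce, for each $t \in T$, a unique $h'(t) \in H'$ realizing $\phi_t$. The main obstacle is to promote this pointwise construction to a single element $h' \in H'$ working simultaneously on a whole neighborhood in $T$. Since $H'$ is discrete while $\phi$, $\alpha_1$, $\alpha_2$ all depend holomorphically on $t$, the function $t \mapsto h'(t)$ is necessarily locally constant; after shrinking $T$, this yields a single $h'$ with $\alpha_2 = h' \cdot \alpha_1$ and $\phi = \widetilde{a}(h')$. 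The resulting natural bijection of Isom-sheaves establishes the monomorphism property.
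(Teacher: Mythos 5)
Your proposal is correct and follows essentially the same route as the paper: both arguments reduce to checking full faithfulness on test spaces, invoke the preceding lemma fiberwise to produce a unique $h'(t) \in H'$ realizing the given isomorphism over each point (using that points of $O$ give pairwise non-isogenous elliptic curves), and then use discreteness of $H'$ together with continuity of the data to conclude that $t \mapsto h'(t)$ is constant. The only difference is presentational: you spell out the torsor-trivialization reduction and separate injectivity from surjectivity, while the paper treats both at once via the existence-and-uniqueness statement of its Lemma 4.2.
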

\begin{proof}
We must show that $\bar{f}$ is a fully faithful functor. That is, assume that we are given an isomorphism 
$$
\begin{tikzcd}
\phi^*(f^*\X) \ar[r,"\sim"] \ar[d] & \psi^*(f^*\X) \ar[d] \\
S \ar[r,"\sim"] & S'
\end{tikzcd}
$$
of pullbacks of the family $f^*\X$ to complex spaces $S$ and $S'$ via maps $\phi: S \rightarrow \mathbb{H}^3$ and $\psi: S' \rightarrow \mathbb{H}^3$. Then we must show that there exists a unique $h' \in H'$ such that the following diagram commutes.
$$
\begin{tikzcd}
\phi^*(f^*\X) \ar[r, "\sim"] \ar[d] &\psi^*(f^*\X) \ar[d] \\
f^*\X \ar[r, "\widetilde{a}(h')"] & f^*\X
\end{tikzcd}
$$
Fix $s \in S$ (not necessarily the basepoint). Let $X_s$ be the fiber of $\phi^{*}(f^*\X)$ over $s$. Let $s'$ be the image of $s$ in $S'$, and $X'_s$ the fiber of $\psi^*(f^*\X)$ over $s'$. Then $\phi$ (resp. $\psi$) maps $X_s$ (resp. $X'_s$) isomorphically onto some fiber $X$ (resp. $X'$) of $f^*\X$. Therefore, we obtain an induced isomorphism $X \rightarrow X'$. But by Lemma 4.2, this means that we can find an element $h'_s \in H'$ whose action on $f^*\X$ induces the isomorphism $X \rightarrow X'$. We can construct such an $h'_s$ for every $s \in S$. But since $H'$ is discrete and acts continuously, we must have $h'_s = h'$, a constant. Therefore, the claim follows. 

\end{proof}

\begin{rem}

For the purpose of proving that $\bar{f}$ is \'etale and proper, it is permitted to replace $[\mathbb{H}^3/H']$ by a finite \'etale cover. We have a subgroup $\Gamma(4)^3 \leq H$. This subgroup intersects $G$ trivially, since $-I$ is not congruent to $I$ mod $4$. Therefore, the quotient map $H \rightarrow H'$ induces an isomorphism between $\Gamma(4)^3$ and some subgroup of $H'$, namely $\Gamma(4)^3 G /G$. In this way, we can regard $\Gamma(4)^3$ as a subgroup of $H'$, and hence $M(4)^3$ as a finite \'etale cover of $[\mathbb{H}^3/H']$. From now on, we will work with the induced map $M(4)^3 \rightarrow M_X$

\end{rem}

\begin{thm}
The induced map $M(4)^3 \to M_X$ is an immersion, 
and in the 10 cases when $h^{2,1}=3$
it is an open embedding.
\end{thm}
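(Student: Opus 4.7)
The plan is to reduce the theorem to a Kodaira--Spencer computation at each point, combined with the monomorphism established in Theorem~4.3. The immersion claim amounts to injectivity of the differential of the classifying map everywhere, and in the ten cases with $h^{2,1}(X)=3$ the equality of dimensions together with Theorem~4.3 upgrades this to an open embedding.

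First I would identify the relevant tangent spaces. At a triple $(\tau_1,\tau_2,\tau_3) \in \mathbb{H}^3$ the tangent space to $M(4)^3$ is $\bigoplus_{i=1}^{3} H^1(E_{\tau_i}, TE_{\tau_i})$, a $3$-dimensional space. The tangent space to $M_X$ at the image $[X]=[Y/G]$ is $H^1(X,TX) = H^1(Y,TY)^G$ since $G$ is finite. Because $Y$ is an abelian variety, its tangent bundle is trivialized by the invariant vector fields, and Künneth gives
$$
H^1(Y,TY) \;\cong\; \bigoplus_{i,j=1}^{3} H^{0,1}(E_i)\otimes T_0 E_j,
$$
a $9$-dimensional space decomposing into a $3$-dimensional diagonal part ($i=j$) and a $6$-dimensional off-diagonal complement. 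Since the family $\mathcal{U}^3/G$ is assembled factor by factor, the Kodaira--Spencer map sends the $i$-th source summand tautologically into the $(i,i)$ diagonal summand on the target.

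Next I would compute $H^1(Y,TY)^G$. Shifts in $G_S$ act by translation on the abelian variety $Y$, hence trivially on both $H^{0,1}$ and $T_0$; so only the twist group $G_T \cong (\Z/2)^2$ contributes. Each nontrivial $\iota \in G_T$ acts as $-1$ on two of the three factors and as $+1$ on the third, so it acts on $H^{0,1}(E_i)\otimes T_0 E_j$ by the product of its signs at indices $i$ and $j$. A direct sign inspection shows that the invariants of a single nontrivial $\iota$ form a $5$-dimensional subspace containing the diagonal, and that the intersection of the invariants of any two independent generators of $G_T$ is exactly the $3$-dimensional diagonal. Hence the Kodaira--Spencer differential is injective at every point, giving the immersion claim in all cases; in the ten cases with $h^{2,1}(X)=3$ it is moreover an isomorphism, so $M(4)^3 \to M_X$ is étale there.

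Finally, to upgrade étaleness to the open embedding claim, I would combine it with Theorem~4.3. That theorem provides a monomorphism for $\bar{f}\colon [\mathbb{H}^3/H']\to M_X$ over the dense open substack $\mathcal{O}$. Since $M(4)^3 \to [\mathbb{H}^3/H']$ is a finite étale cover, étaleness descends to $\bar{f}$, and a generic monomorphism with étale differential is a monomorphism globally: the non-monomorphism locus of an étale morphism between smooth Deligne--Mumford stacks is closed, hence empty once empty on a dense open. An étale monomorphism is an open embedding, so $\bar{f}$ is an open embedding, and the induced map $M(4)^3 \to M_X$ acquires the structure asserted by the theorem as the composition of the étale cover $M(4)^3 \to [\mathbb{H}^3/H']$ with this open embedding. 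I expect the main obstacle to be the case-by-case sign analysis verifying that the essential $G$ from \cite{DW} really furnishes two independent generators of $G_T$ whose invariants collapse to exactly the $3$-dimensional diagonal; once that is checked uniformly, the rest is formal stack-theoretic descent.
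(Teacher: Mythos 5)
Your overall strategy --- compute the Kodaira--Spencer differential via the product structure of $\mathcal{U}^3/G$, show it is injective, and combine this with the generic injectivity from Theorem~4.3 --- is essentially the paper's, and your computation that the $G$-invariants of $H^1(Y,T_Y)\cong\bigoplus_{i,j}H^{0,1}(E_i)\otimes T_0E_j$ are exactly the $3$-dimensional diagonal is correct and is a concrete version of the paper's ``bulk sector'' argument (the paper works with $H^{2,1}(X)$ and contracts with the holomorphic top form). But there are two genuine gaps. First, the identification $T_{[X]}M_X=H^1(X,T_X)=H^1(Y,T_Y)^G$ is false whenever $G$ has fixed points: the quotient $X$ then has curves of $A_1$ singularities, and the tangent space to its deformation space, $\mathrm{Ext}^1(\Omega_X,\mathcal{O}_X)$, contains besides $H^1(Y,T_Y)^G$ a contribution supported on the singular locus (the ``twisted sectors''). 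If your identification were correct, $M_X$ would be $3$-dimensional in all $35$ cases, contradicting for instance case (0-2), where $h^{2,1}=19$; it would also make the theorem's distinction between ``immersion in general'' and ``open embedding when $h^{2,1}=3$'' vacuous. What is true, and what the paper actually asserts, is that the bulk piece is a \emph{direct summand} of the tangent space; injectivity into a summand still yields the immersion in all cases, and the hypothesis $h^{2,1}=3$ is precisely what forces the summand to be everything and hence the map to be \'etale. Your write-up needs to make this distinction rather than claim equality outright.

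Second, the topological principle you invoke to finish --- ``the non-monomorphism locus of an \'etale morphism between smooth Deligne--Mumford stacks is closed, hence empty once empty on a dense open'' --- is false without properness. The \'etale map $Z\sqcup\{p\}\to Z$ (identity on $Z$, inclusion of a single point $p\in Z$) is injective over the dense open $Z\setminus\{p\}$ but is not a monomorphism. To close the argument you need either the properness of $M(4)^3\to M_X$, which the paper only establishes in the following theorem and exploits in Corollary~4.6 through the local constancy of the degree of a proper \'etale map, or an argument special to the present situation: a local isomorphism that identifies two distinct points identifies two entire open neighbourhoods, and since the locus where Theorem~4.3 gives injectivity is the complement of a countable union of divisors (so meets every nonempty open set, and its complement is preserved under the induced local isomorphisms), one produces two identified points both lying in the non-isogenous locus, contradicting Theorem~4.3. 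Either patch works, but as written this step is not justified.
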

\begin{proof}
Consider the sum of the the three lines 
\[
H^{1,0}(E_i) \otimes H^{1,0}(E_j) \otimes H^{0,1}(E_k)
\]
as $\{i,j,k\}$ runs over cyclic permutations of $\{1,2,3\}$. 
The orbifold map $E_1 \times E_2 \times E_3 \to X$ 
embeds the sum of  these three lines 
as the 'bulk sector,' a direct summand of $H^{2,1}(X)$. 
The assumption $h^{2,1}=3$ implies that in those cases,
the sum is actually isomorphic to $H^{2,1}(X)$. 
By contracting with top forms we switch to tangent spaces, finding that  
the sum of the three $H^1(E_i, T_{E_i})$ equals 
(all of or a direct summand of) $H^1(X,T_X)$.
This shows that $\bar{f}$ is an immersion in general, 
and in the 10 cases when $h^{2,1}=3$
it is a local isomorphism. On the other hand, 
Theorem 4.3 along with the remark above shows that 
$M(4)^3 \rightarrow M_X$ is injective on the complement of a countable union of divisors in 
$M(4)^3$, 
so when $h^{2,1}=3$ it must be an open embedding. 
(In the remaining cases, the map is a birational morphism to its image 
in $M_X$.)
\end{proof}

\begin{thm}
The induced map $M(4)^3 \to M_X$ is proper, 
so in general is a desingularization of its image 
which is a closed subspace of $M_X$.
\end{thm}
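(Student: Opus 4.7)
\emph{Strategy.} The plan is to verify the valuative criterion for properness of Deligne-Mumford stacks. Separatedness follows from Theorem~4.4, which shows that $\bar f$ is an immersion; finite type is immediate. The content of the statement is therefore universal closedness. Let $R$ be a complete DVR with fraction field $K$ and algebraically closed residue field $\kappa$, and suppose given a commutative square
\[
\begin{tikzcd}
\mathrm{Spec}\,K \ar[r, "g_K"] \ar[d] & M(4)^3 \ar[d, "\bar f"] \\
\mathrm{Spec}\,R \ar[r, "u"] & M_X.
\end{tikzcd}
\]
One must produce a lift $u': \mathrm{Spec}\,R \to M(4)^3$, possibly after a finite étale extension of $R$. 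The existence of such a lift shows that the image is closed and the map is universally closed; uniqueness (which follows from Theorem~4.3 on the dense open $\mathcal{O}$, together with separatedness) then completes the valuative criterion.

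\emph{Construction of the lift.} The morphism $u$ corresponds to a family $\eta: \mathcal X \to \mathrm{Spec}\,R$ of complex structures on $X$. Because $M_X$ was defined as the connected component of $M_X^{+}$ containing the honest quotients $Y/G$, the closed fiber $\mathcal X_\kappa$ is again of the form $(E_1^0 \times E_2^0 \times E_3^0)/G$ for some smooth elliptic curves $E_i^0$ over $\kappa$. The generic fiber $\mathcal X_K$ carries, via $g_K$, a presentation as $(E_{1,K} \times E_{2,K} \times E_{3,K})/G$ with level-$4$ structures. By the argument of Lemma~4.1, each sufficiently generic fiber of $\eta$ carries exactly three canonical elliptic fibrations; I would propagate these to three global fibrations $\eta_k: \mathcal X \to \mathcal B_k$ over $\mathrm{Spec}\,R$, and, after a finite étale extension of $R$ to fix origins (trivializing monodromy in the choice of basepoints), extract smooth families $\mathcal E_k \to \mathrm{Spec}\,R$ of elliptic curves with $(\mathcal E_1 \times_R \mathcal E_2 \times_R \mathcal E_3)/G \cong \mathcal X$. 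Finally, since $M(4) \to M$ is finite étale over the smooth locus, each level-$4$ structure on $\mathcal E_{k,K}$ extends uniquely (possibly after a further finite étale base change) to a level-$4$ structure on $\mathcal E_{k,R}$. The resulting triple of pointed families with level-$4$ structures defines the required lift $u': \mathrm{Spec}\,R \to M(4)^3$.

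\emph{Main obstacle.} The crux is excluding a degeneration of one of the elliptic factors $E_{k,t}$ to a nodal cubic as $t \to 0$: such a degeneration would send the classifying map into the Deligne-Rapoport cusp of $\overline{M(4)}$, not into $M(4)$ itself, and properness of $\bar f$ would fail. The decisive input preventing this is the definition of $M_X$: it parameterizes only orbifolds arising from smooth products of three elliptic curves modulo $G$, so the closed-fiber limit remains of exactly that form and the elliptic factors stay smooth. A secondary technical point is the passage from three abstract elliptic fibrations $\eta_k$ on $\mathcal X$ to genuine families $\mathcal E_k$ of pointed elliptic curves over $\mathrm{Spec}\,R$; this is a finite étale descent issue handled by enlarging $R$. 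Combining properness with Theorem~4.4 yields that $\bar f$ is a proper local isomorphism, hence an isomorphism over the dense open $\mathcal O$ and in general a desingularization onto its image, a closed subspace of $M_X$.
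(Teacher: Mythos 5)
There is a genuine gap at the crux of your argument. You write that ``the decisive input preventing [degeneration of an elliptic factor] is the definition of $M_X$: it parameterizes only orbifolds arising from smooth products of three elliptic curves modulo $G$, so the closed-fiber limit remains of exactly that form.'' That is not what $M_X$ is: it is defined as the connected component of the full moduli stack $M_X^{+}$ of complex structures on $X$ that \emph{contains} the honest quotients $Y/G$, not as the locus of such quotients. A priori a deformation of a quotient need not be a quotient, and the image of $M(4)^3$ could fail to be closed in $M_X$ precisely because the special fiber $\mathcal{X}_\kappa$ might be a complex structure on $X$ not of the form $(E_1^0\times E_2^0\times E_3^0)/G$, with the elliptic factors degenerating as one approaches it. Establishing that this cannot happen is exactly the content of the theorem, so your appeal to the definition is circular: it assumes the conclusion that the quotient locus is closed in its component.

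The paper closes this gap with a monodromy argument that your proposal is missing. The generic-fiber lift gives three classifying maps to $M(4)$, hence (by properness of $\overline{M}$) extensions $\Delta\to\overline{M}$, and one must rule out the central points landing in the cusp. Since the family $X_t$ extends across $t=0$ inside $M_X$, the monodromy on $H^3(X_t,\Q)$ around the puncture is trivial; the canonical identification $H^3(X,\Q)\cong\bigotimes_{i=1}^{3}H^1(E_i,\Q)$ then forces the monodromy on each $H^1(E_{t,i},\Q)$ to be trivial, whereas a degenerating elliptic curve has nontrivial unipotent local monodromy. Hence the $E_{i,0}$ are smooth, the family $Y_t$ is topologically trivial, and the $G$-action extends to $Y_0$ with quotient $X_0$, producing the lift. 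Your remaining scaffolding (valuative criterion, propagating the three elliptic fibrations of Lemma~4.1, extending level-$4$ structures after a finite \'etale base change) is consistent with the paper's approach, but without the monodromy step the argument does not go through.
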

\begin{proof}
Over $M(4)^3$  there are three universal bundles 
whose fibers, respectively, are the three elliptic $E_i$.
By the valuative criterion for properness, we need to show that 
if we have an algebraic family $X_t$ of $X$'s parametrized by $t$ in 
a regular algebraic curve $\Delta$
(or, intuitively, in the unit disc)
and a lift $Y_t \to X_t$ over the generic point of $\Delta$
(respectively, a family of lifts parametrized by $t$ in the punctured disc)
then we can complete the curve of lifts
(respectively, fill in with a $Y_0 \to X_0$).
The lift gives us three maps $E_i$ from the punctured disc to $M(4)$, hence to $M$.
We see by the same valuative criterion that each of these maps 
extends to the compactification:
$E_i: \Delta \to {\overline{M}}$.
We claim that the central fibers $E_{i,0}$ must be non-singular. 
This follows immediately from the canonical identification \eqref{product}
of $H^3(X_t,\Q)$ with $\otimes_{1=1,2,3}H^1(E_{t,i},\Q)$: 
The monodromy on $H^3(X_t,\Q)$, as $t$ goes around the punctured disc,
is trivial since the family extends over $\Delta$.
So it must be trivial also on each $H^1(E_{t,i},\Q)$, 
so the elliptic curves cannot degenerate.
The family of $Y_t, \ t \in \Delta$ is therefore topologically trivial,
so the $G$ action extends to $Y_0$ with quotient $X_0$, proving the theorem.
\end{proof}
\begin{cor}
The map $\bar{f}: [\mathbb{H}^3/H'] \rightarrow M_X$ is an isomorphism in the 10 cases in which $h^{2,1} = 3$.
\end{cor}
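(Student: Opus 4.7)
The plan is to combine the three substantive results just proved. Theorems~4.5 and~4.6 together show that $M(4)^3 \to M_X$ is both an open embedding and proper. Since $M(4)^3 \to [\mathbb{H}^3/H']$ is a finite \'etale surjection, \'etaleness and properness each descend to the induced map $\bar{f}: [\mathbb{H}^3/H'] \to M_X$; indeed, as noted in the remark preceding Theorem~4.5, it is permitted to check these properties after passing to a finite \'etale cover. So $\bar{f}$ is \'etale and proper, hence a finite \'etale morphism.

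The next step is to invoke Theorem~4.3, which shows $\bar{f}$ is a monomorphism on the dense open substack $\mathcal{O} \subset [\mathbb{H}^3/H']$ parametrising triples of pairwise non-isogenous elliptic curves. Since $\mathbb{H}^3$ is connected, so is $[\mathbb{H}^3/H']$, and a finite \'etale morphism with connected source has constant degree. The degree at any point of $\mathcal{O}$ is $1$ by the monomorphism property, so the degree is everywhere $1$. Hence $\bar{f}$ is an \'etale monomorphism, i.e.\ an open embedding, and combining with properness, its image is both open and closed in $M_X$.

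To finish, I would note that $M_X$ is defined as the connected component of $M_X^{+}$ containing the actual orbifold quotients $Y/G$, and these quotients lie in the image of the original family $\eta$ and therefore in the image of $\bar{f}$. So the image of $\bar{f}$ is a nonempty clopen subset of the connected stack $M_X$; it must equal $M_X$, and a surjective open embedding of stacks is an isomorphism.

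The step that needs the most care is the descent of \'etaleness and properness from the cover $M(4)^3$ down to $\bar{f}$, since Theorems~4.5 and~4.6 are stated for the cover rather than for $\bar{f}$ itself; once that is in place, the rest follows from the standard principle that finite \'etale morphisms of connected stacks have constant degree, together with the defining property of $M_X$ as a specific connected component of $M_X^{+}$.
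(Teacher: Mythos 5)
Your proposal is correct and follows essentially the same route as the paper: the paper likewise combines \'etaleness (from the immersion theorem), degree $1$ on the dense substack $\mathcal{O}$ (from the monomorphism theorem) to conclude $\bar{f}$ is an open embedding, and then uses properness together with the connectedness of $M_X$ to conclude it is an isomorphism. Your additional care about descending \'etaleness and properness along the finite \'etale cover $M(4)^3 \to [\mathbb{H}^3/H']$ is exactly the content of the remark preceding those theorems, so nothing in your argument diverges from the paper's.
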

\begin{proof}
We have shown that $\bar{f}$ is \'etale, hence unramified. Since it has degree 1 on the substack $\mathcal{O}$, the same must be true everywhere, and hence $\bar{f}$ is an open embedding. But we have shown that it is also proper. By the connectedness of $M_X$, $\bar{f}$ is an isomorphism.
\end{proof}
\section{Crepant resolutions and the global geometry of $M_{\overline{X}}$}

In this section we briefly review the crepant resolutions
${\overline{X}} \to X$, such that ${\overline{X}}$ is Calabi-Yau.  
In the four cases with $h^{1,1}=h^{2,1}=3$ (namely (0-4),
(1-5), (1-11), (2-12)), $G$ acts freely
on $Y$, so no resolution is needed.  However, the other six examples 
of $h^{2,1}=3$ all
have singularities in their quotient spaces.

\subsection{Local structure}

The fixed-point loci of our orbifolds $X$
are curves, possibly with trident singularities.
There are two local pictures.
At a singular point of a curve of fixed points, 
$X$ is locally of the form ${\mathbb C}^3/( {\mathbb Z}/2 )^2$, with generators
acting as
\begin{eqnarray*}
g_1: \: (x,y,z) & \mapsto & (x,-y,-z), \\
g_2: \: (x,y,z) & \mapsto & (-x,y,-z).
\end{eqnarray*}
The curve of fixed points looks locally like the trident $xyz=0$,
with the three branches $C_1, \ C_2, \ C_3$ 
consisting of fixed points of $g_1, \ g_2$, and $g_3:=g_1 g_2$.
(We can always arrange that the local curve $C_i$ is parallel 
to the global elliptic factor $E_i$ of $Y$.)
At a smooth point of the curve of fixed points,  
$X$ is locally of the form ${\mathbb C}^3/( {\mathbb Z}/2 )$, 
with generator one of the $g_i$. 
So $X$ looks there like the product of 
$C_i$ (=one of the three axes)
with an $A_1$ surface singularity in the two other directions.

We will focus on the first case, ${\mathbb C}^3/( {\mathbb Z}/2 )^2$.
Its coordinate ring is 
\begin{equation}   \label{app:orig-singularity}
{\mathbb C}[x^{2}, y^2, z^{2}, xyz] \: = \: {\mathbb C}[a,b,c,d] / 
(a b c = d^2).
\end{equation}
It has four crepant resolutions.  One way to see this is to take advantage of
the fact that this singularity is a toric variety, and we can represent
the flops graphically as cross-sections of a toric fan as follows:
\begin{center}
\includegraphics[width=0.65\textwidth]{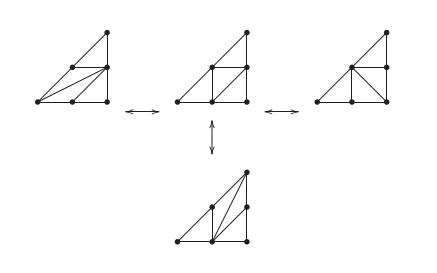}
\end{center}
Local coordinates on the central resolution are given by:
\begin{center}
\begin{tabular}{cc}
\includegraphics[width=0.18\textwidth]{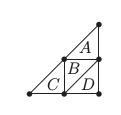}
&
$\begin{array}{l}
A \: = \: {\mathbb C}[b, c, d_{11}], \\
B \: = \: {\mathbb C}[c_{11}, d_1, t], \\
C \: = \: {\mathbb C}[a, b, w], \\
D \: = \: {\mathbb C}[a, b_1, c],
\end{array}$
\end{tabular}
\end{center}
where $tw = 1$, $c_{11} d_{11} = 1$, $b_1 d_1= 1$,
while local coordinates on one of the outlying resolutions are given by:
\begin{center}
\begin{tabular}{cc}
\includegraphics[width=0.18\textwidth]{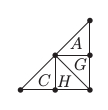}
&
$\begin{array}{l}
A \: = \: {\mathbb C}[b, c, d_{11}], \\
G \: = \: {\mathbb C}[b, c_{11}, t], \\
H \: = \: {\mathbb C}[a, d_1, w], \\
C \: = \: {\mathbb C}[a, b_1, c].
\end{array}$
\end{tabular}
\end{center}

The three outlying crepant resolutions 
as well as the central resolution
can be described as successive blowups.
Let $i,j,k$ be a permutation of $1,2,3$.
First blow $X$ up along a smooth surface containing $C_i$ and $C_j$, 
two of the components of the curve of fixed points in $X$.
(E.g. for $i,j,k = 1,3,2$, the surface is $b=d=0$.)
Second, blow up 
along a smooth surface containing
the remaining curve $C_k$. 
(In the above case, we could take this second surface to be $c=d/b=0$.)
This turns out to leave an isolated singularity, a conifold.
So, third and finally, take a crepant resolution
of the conifold.  There are two such resolutions; 
one will be 
the $k$-th outer resolution,
and the other, independently of the permutation $i,j,k$, 
will be the central resolution.

\subsection{Global structure}

Globally there are further subtleties to get a K\"ahler
resolution.  Consider for example the case (0-1).  This has curves of
$A_1$ singularities (along sixteen copies of each of the three elliptic curves $E_i$).
These intersect at 64 fixed points, each of the form
of the $({\mathbb Z}/2)^2$ quotient above.
 There is thus a total of $4^{64}$
possible resolutions.  However, not all of these are K\"ahler.  
We have not checked the total number of K\"ahler resolutions; it is probably large 
\footnote{P.~Aspinwall conjectures that it is on the order of $2^{48}$.}.

In principle, whatever the number $n$ of K\"ahler crepant resolutions,
the forgetful map  $M_{\overline{X}} \to M_X$ is a covering of this degree $n$.

Note that preimage of $M_X$ in $M_{\overline{X}}$ is itself reducible: 
it has a central component $M_{\overline{X}, \text{central}}$, 
specified by performing the central local resolution at each point of the zero-dimensional stratum. 
It has one, two or three additional components of small degrees over $M_X$,
specified by performing the type-$i$ local resolution 
at each point $p$ of the zero-dimensional stratum. 
Recall that each local curve $C_i$ at each point $p$  is parallel 
to the global elliptic factor $E_i$ of $Y$.
So on a given $X=Y/G$ it makes sense to perform the same type-$i$ local resolution 
at all points $p$.
On the other hand, as we vary $X$ in its moduli $M_X$,
monodromy may permute the $E_i$:
depending on the image $H_X/H'_X$ of $H_X$ in $S_3$,
all three of these choices may therefore be on the same component 
(of degree three over $M_X$), 
or on separate components (of degree one),
or two can come together with the third staying separate.

In any event, the map  $M_{\overline{X}, \text{central}} \to M_X$ of connected components is an isomorphism 
in all ten cases with $h^{2,1}=3$.

\section{Picard groups and Hodge bundles}

In this section we will study the Picard groups and Hodge line bundles
of $M_X$ when $h^{2,1}=3$. By Theorem 4.5, we have that $M_X \cong [\mathbb{H}^3/H']$. Our strategy is to prove that the structure sheaf $\mathcal{O}_{M_X}$ is acyclic, and apply the exponential sequence on $M_X$. We will then prove that $\text{Pic}(M_X)$ is isomorphic to the group cohomology $H^2(H', \Z)$, which we will analyze in detail. 

\subsection{Picard Groups} We begin with a few lemmas on analytic quotient stacks. The first is straightforward and serves mainly to remind the reader of some terminology.

\begin{lem}
Let $X$ be a smooth complex manifold, and $T$ a group acting virtually freely and holomorphically on $X$ (\textit{i.e.}, $T$ has a finite index subgroup $T_0$ acting freely on $X$). Let $\mathcal{X}$ denote the quotient stack $[X/T]$. Then the categorical quotient $X_{mod} = X/T$ exists as a complex analytic space, and the natural map $f: \mathcal{X} \rightarrow X_{mod}$ is a coarse moduli space for $\mathcal{X}$. Furthermore, $f_* \mathcal{O}_{\mathcal{X}} \cong \mathcal{O}_{X_{mod}}$.
\end{lem}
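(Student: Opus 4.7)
The plan is to reduce everything to classical facts about discrete group actions on complex manifolds together with Cartan's theorem on finite group quotients. First I would construct $X_{mod}$ in two steps. The hypothesis that a finite index subgroup $T_0 \subset T$ acts freely (supplemented by the implicit proper discontinuity that makes the orbifold quotient well-behaved, as holds in all applications in this paper) lets me form $X/T_0$ as a smooth complex manifold with the quotient map $X \to X/T_0$ a local biholomorphism. The finite residual group $T/T_0$ then acts holomorphically on $X/T_0$, and Cartan's theorem on finite group quotients of complex analytic spaces produces $(X/T_0)/(T/T_0)$ as a normal complex analytic space; this is declared to be $X_{mod} = X/T$.

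Second, for the coarse moduli property I would verify the two standard conditions. On geometric points, objects of $[X/T](\mathrm{pt})$ are $T$-torsors together with an equivariant map to $X$, and isomorphism classes of these correspond bijectively to $T$-orbits in $X$, which are by definition the points of $X_{mod}$. For the initiality condition, a morphism $\mathcal{X} \to Z$ to an analytic space $Z$ is the same as a $T$-invariant holomorphic map $X \to Z$, and the two-step construction shows that $X_{mod}$ is the categorical quotient in the analytic category, so every such map factors uniquely through $f$.

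Third, for the identification $f_* \mathcal{O}_{\mathcal{X}} \cong \mathcal{O}_{X_{mod}}$, I would use the tautological description of the structure sheaf on a quotient stack: for any open $U \subset X_{mod}$ with preimage $\widetilde{U} \subset X$ under $p \colon X \to X_{mod}$, one has
\[
\Gamma\bigl(f^{-1}(U),\, \mathcal{O}_{\mathcal{X}}\bigr) \;=\; \Gamma(\widetilde{U},\, \mathcal{O}_X)^T.
\]
But the analytic structure on $X_{mod}$ was built (via the two-step quotient) precisely so that $\mathcal{O}_{X_{mod}}(U)$ consists of the $T$-invariant holomorphic functions on $\widetilde{U}$; this gives the desired sheaf isomorphism.

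I do not expect a genuine obstacle: the lemma is essentially bookkeeping, recording a version of the universal quotient construction and its compatibility with structure sheaves. The only mild subtlety is the suppressed assumption of proper discontinuity of the $T_0$-action, which I would make explicit up front (and which is automatic in the application, where $T = H'$ acts on $\mathbb{H}^3$ through an arithmetic subgroup of $\Gamma^3 \rtimes S_3$).
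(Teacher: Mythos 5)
Your proposal is correct and follows essentially the same route as the paper: both rest on Cartan's quotient theorem \cite{Car57} for the existence of $X_{mod}$ as a normal analytic space, and both verify the coarse moduli property and the identification $f_*\mathcal{O}_{\mathcal{X}} \cong \mathcal{O}_{X_{mod}}$ by unwinding $T$-invariance of maps and functions. The only cosmetic difference is that the paper applies Cartan's theorem directly to the properly discontinuous $T$-action rather than factoring through the free $T_0$-quotient, and your explicit flagging of the proper discontinuity hypothesis is a fair point the paper glosses over with ``in particular.''
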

\begin{proof}
We have in particular that $T$ acts properly discontinuously on $X$. Then \cite{Car57}, $X_{mod} = X/T$ is a normal analytic space. The map $f: \mathcal{X} \rightarrow X_{mod}$ is induced by the natural map $X \rightarrow X_{mod}$, since the latter is $T$ equivariant. To see that the map is a coarse moduli space, note that any map $\phi$ from $\mathcal{X}$ to a space $Y$ factors through a map $X/T \rightarrow Y$, since any such map $\phi$ is by definition a $T$ equivariant map $X \rightarrow Y$. Furthermore, it is immediate that there is a bijection between the closed points of $\mathcal{X}$ and $X/T$. 

For the last claim, note that a section of $f_* \mathcal{O}_{\mathcal{X}}$ over an open subset $U \subset X_{mod}$ is by definition a holomorphic function $\pi^{-1}(U) \rightarrow \C$ (for $\pi$ the quotient map $X \rightarrow X_{mod}$) which is $T$ invariant. However, by the universal property of the quotient, this is the same thing as a section of $\mathcal{O}_{X_{mod}}$ over $U$.
\end{proof}
We recall the following definition.
\begin{defn}
\cite{GR04} A complex analytic space $(S, \mathcal{O}_S)$ is called \textit{Stein} if it satisfies Cartan's Theorem B, namely every coherent analytic sheaf of $\mathcal{O}_S$-modules on $S$ is acyclic.
\end{defn}

\begin{lem}
Keeping the notation from the previous lemma, let $\mathcal{O}_{\mathcal{X}}$ denote the structure sheaf of $\mathcal{X}$. If $X_{mod}$ is a Stein space, then $\mathcal{O}_{\mathcal{X}}$ is acyclic (\textit{i.e.} $H^i(\mathcal{X}, \mathcal{O}_{\mathcal{X}}) = 0$ for $i > 0$).
\end{lem}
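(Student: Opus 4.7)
My plan is to deduce the vanishing $H^i(\mathcal{X}, \mathcal{O}_{\mathcal{X}}) = 0$ for $i > 0$ by combining the Leray spectral sequence for the coarse moduli map $f: \mathcal{X} \to X_{mod}$ with a local vanishing result for its higher direct images. Together with the previous lemma's identification $f_* \mathcal{O}_{\mathcal{X}} \cong \mathcal{O}_{X_{mod}}$, Leray reduces the claim to (i) $R^i f_* \mathcal{O}_{\mathcal{X}} = 0$ for $i > 0$, and (ii) $H^i(X_{mod}, \mathcal{O}_{X_{mod}}) = 0$ for $i > 0$. Part (ii) is immediate from the Stein hypothesis on $X_{mod}$ and Cartan's Theorem~B.

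For (i), the statement is local on $X_{mod}$. Given $p \in X_{mod}$, I pick a preimage $x \in X$ under the quotient map $\pi: X \to X_{mod}$; since $T$ acts properly discontinuously, the stabilizer $T_x \subset T$ is finite, and a standard orbifold-chart construction produces a $T_x$-invariant open Stein neighborhood $V$ of $x$ in $X$ whose $T$-translates $\{gV \mid gT_x \in T/T_x\}$ are pairwise disjoint. Then $U := V/T_x$ is an open neighborhood of $p$ in $X_{mod}$, and $f^{-1}(U) \cong [V/T_x]$ as analytic stacks. Shrinking $V$ to any smaller $T_x$-invariant Stein open $V' \subseteq V$ gives a fundamental system of such charts, so it suffices to show that $H^i([V'/T_x], \mathcal{O}) = 0$ for $i > 0$ whenever $V'$ is $T_x$-invariant and Stein.

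For this local vanishing I invoke the Grothendieck spectral sequence for equivariant sheaf cohomology,
$$E_2^{p,q} = H^p\bigl(T_x,\, H^q(V', \mathcal{O}_{V'})\bigr) \ \Longrightarrow\ H^{p+q}([V'/T_x], \mathcal{O}),$$
which arises from the factorization $\Gamma([V'/T_x], -) = (-)^{T_x} \circ \Gamma(V', -)$. The Stein hypothesis on $V'$ kills the rows $q > 0$, and the classical averaging argument for finite groups acting on $\mathbb{C}$-vector spaces (the projector $\tfrac{1}{|T_x|}\sum_g g$ splits invariants off as a direct summand, and multiplication by $|T_x|$ annihilates positive-degree group cohomology) kills the columns $p > 0$. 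Only the $(0,0)$ entry survives, yielding the desired local vanishing.

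The step I expect to be most delicate is the construction of the $T_x$-invariant Stein chart $V$ with pairwise disjoint $T$-translates: one must combine proper discontinuity (to find a neighborhood on which translates collide only via $T_x$) with the fact that the finite intersection $\bigcap_{t \in T_x} tW$ of any Stein neighborhood $W$ of $x$ remains Stein and is now $T_x$-invariant. Once this chart is in hand, the rest of the argument is a formal manipulation of the two spectral sequences above, and there is no obstruction to gluing the local vanishing to a global statement on $X_{mod}$.
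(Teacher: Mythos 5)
Your argument is correct, but it takes a genuinely different route from the paper's. The paper works globally: it runs the equivariant (Grothendieck) spectral sequence $H^p(T, H^q(X,\mathcal{O}_X)) \Rightarrow H^{p+q}(\mathcal{X},\mathcal{O}_{\mathcal{X}})$ once, for $T$ finite, kills the columns $p>0$ by averaging, and then deduces $H^q(X,\mathcal{O}_X)=0$ for $q>0$ not from any Steinness of $X$ itself but from the fact that $\pi: X \to X_{mod}$ is a \emph{finite} holomorphic map onto a Stein space, so that $\pi_*\mathcal{O}_X$ is coherent and Cartan~B applies downstairs; the general (virtually free) case is then handled by replacing $X$ with $X/T_0$ and $T$ with the finite quotient $T/T_0$. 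You instead factor through the Leray spectral sequence for the coarse map $f$, prove $R^if_*\mathcal{O}_{\mathcal{X}}=0$ by exhibiting a cofinal system of orbifold charts $[V'/T_x]$ with $V'$ Stein and $T_x$ finite, and apply the same two vanishing mechanisms (Steinness and averaging) only locally. What your approach buys is uniformity: you never need the reduction to the finite-group case, and the only global input is Cartan~B on $X_{mod}$, since Stein neighborhood bases always exist on a manifold (and your observation that $\bigcap_{t\in T_x} tW$ stays Stein is the right one --- a finite intersection of Stein opens is Stein, being a closed analytic subspace of their product). The cost is more foundational overhead: you must justify the local presentation $f^{-1}(U)\cong [V/T_x]$ (which does require proper discontinuity, not merely virtual freeness, to separate the translates $gV$) and the Leray spectral sequence for a morphism from an analytic stack to its coarse space, whereas the paper leans instead on the Grauert--Remmert results for finite holomorphic maps. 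Both proofs are sound.
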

\begin{proof}
The group $H^i(\mathcal{X},\mathcal{O}_{\mathcal{X}})$ is by definition the $i$th equivariant sheaf cohomology of $\mathcal{O}_X$ over $X$ with respect to the action of $T$. In other words, it is the $i$th right derived functor of the composition $(-)^T \circ \Gamma$, where $\Gamma$ is the global sections functor and $(-)^T$ is the $T$-invariants functor. Therefore, we have a spectral sequence
$$
E^{p,q}_2 = H^p(T, H^q(X, \mathcal{O}_X)) \implies H^{p+q}(\mathcal{X}, \mathcal{O}_X).
$$
Firstly, assume that $T$ is finite. Since $H^q(X, \mathcal{O}_X)$ is a $\C$ vector space, we have that $E_2^{p,q} = 0$ for $p > 0$. In particular, there is an isomorphism
$$
H^i(\mathcal{X}, \mathcal{O}_{\mathcal{X}}) \cong H^0(T, H^2(X, \mathcal{O}_X)) \cong H^i(X, \mathcal{O}_X)^T .
$$
Now, since $T$ is finite, the quotient map $\pi: X \rightarrow X/T$ is a finite holomorphic mapping. Therefore by \cite{GR04} (Section I.1, Theorem 5), $\pi_*$ induces an isomorphism on coherent sheaf cohomology and 
$$
H^i(X, \mathcal{O}_X) \cong H^i(X_{mod}, \pi_*\mathcal{O}_{X}).
$$
Furthermore, since $\pi$ is proper, we also have that $\pi_*\mathcal{O}_X$ is a coherent sheaf of $\mathcal{O}_{X_{mod}}$ modules (\cite{GR04}, Section I.3, Theorem 3). Since $X_{mod}$ is Stein, Cartan's Theorem B ensures that $\pi_*\mathcal{O}_{X}$ is then acyclic. Putting everything together, we obtain $H^i(\mathcal{X}, \mathcal{O}_{\mathcal{X}}) = 0$ for $i > 0$.

Now, relax the assumption that $T$ is finite. By replacing $X$ by $X/T_0$ and $T$ by $T/T_0$, we apply the result above to obtain the claim.
\end{proof}

We return now to $M_X$. As a consequence of the first lemma, the analytic space $\mathbb{H}^3/H'$ is a coarse moduli space for $M_X$. We denote it by $C_X$.

\begin{lem}
The analytic space $C_X$ is a Stein space.
\end{lem}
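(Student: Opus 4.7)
The plan is to exhibit $C_X$ as the quotient of a well-understood Stein manifold by a finite group, and then invoke the classical result that finite-group quotients of Stein spaces are Stein. More precisely, I aim to show
$$
C_X \;\cong\; M(4)^3 / F,
$$
where $F$ is a finite group and $M(4) = \mathbb{H}/\Gamma(4)$ is the modular curve of level 4.

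The first step is to pin down the effective action on $\mathbb{H}^3$. The translation part $(\Z/4)^6$ of $H_{max}$ acts trivially on the base $\mathbb{H}^3$ (and the subgroup $\{\pm I\}^3$ of $\Gamma^3$ acts trivially too), so the $H'$-action factors through its image $\bar H' \subset \Gamma^3 \rtimes S_3$, and $\mathbb{H}^3/H' = \mathbb{H}^3/\bar H'$. By Remark 4.4, $\Gamma(4)^3$ embeds as a subgroup of $H'$; since $\Gamma(4)$ is a finite-index normal subgroup of $\Gamma$ and $S_3$ permutes identical factors, $\Gamma(4)^3$ is finite-index normal in $\Gamma^3 \rtimes S_3$, and therefore also in $\bar H'$.

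Next, I would note that $\mathbb{H}^3/\Gamma(4)^3 = M(4)^3$ is Stein: since $\Gamma(4)$ is torsion-free (as $N = 4 \geq 3$), it acts freely on $\mathbb{H}$, so $Y(4) = M(4)$ is a smooth affine algebraic curve and hence a Stein manifold, and finite products of Stein spaces are Stein. Finally, applying the classical theorem of Cartan--that the quotient of a Stein space by a finite group of holomorphic automorphisms is again Stein--to the action of the finite group $F := \bar H'/\Gamma(4)^3$ on $M(4)^3$ yields the Steinness of $C_X = M(4)^3/F$. The main content is essentially bookkeeping: isolating the effective action $\bar H'$ and verifying the normality of $\Gamma(4)^3$ inside it; both the Steinness of the modular curves $Y(N)$ for $N\ge 3$ and the finite-quotient theorem are standard, so there is no serious analytic obstacle.
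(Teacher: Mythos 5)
Your argument is correct, but it runs in the opposite direction from the paper's. You present $C_X$ as the \emph{target} of a finite surjection from a Stein manifold: $C_X \cong M(4)^3/F$ with $M(4)^3 = Y(4)^3$ a product of smooth affine curves and $F = \bar{H}'/\Gamma(4)^3$ finite, and you invoke the fact that a finite-group quotient of a Stein space is Stein. The paper instead presents $C_X$ as the \emph{source} of a finite holomorphic map: since $H$ has finite index in $H_{max}$, there is a finite map $C_X = \mathbb{H}^3/H \to C_{max} = \mathbb{H}^3/H_{max}$, and the paper computes $C_{max}$ explicitly --- the translations act trivially, $\mathbb{H}/\Gamma \cong \C$ via the $j$-invariant, and $\C^3/S_3 \cong \C^3$ via elementary symmetric functions --- then uses the Grauert--Remmert theorem that the source of a finite holomorphic map to a Stein space is Stein. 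Both finiteness descent/ascent statements are classical, so the two proofs are equally rigorous; your route leans on the affineness (or Behnke--Stein) of the open modular curve $Y(4)$ and the normality of $\Gamma(4)^3$ in $\bar{H}'$ (which does follow from Remark 4.4 together with normality of $\Gamma(4)$ in $\Gamma$ and the $S_3$-symmetry, as you say), while the paper's route buys the extra, independently useful information that the coarse space sits finitely over $\C^3$, uniformly for all the groups $G$ considered. The only point worth making explicit in your write-up is that the copy of $\Gamma(4)^3$ you quotient by first really injects into the effective automorphism group of $\mathbb{H}^3$ (no element of $\Gamma(4)^3$ other than the identity lies in $\{\pm I\}^3$), so that $\mathbb{H}^3/\Gamma(4)^3$ is genuinely $Y(4)^3$ and the residual group $F$ is genuinely finite; this is immediate since $-I \notin \Gamma(4)$, and then the two-stage quotient for the properly discontinuous action is standard.
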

\begin{proof}
Firstly, we remark that $C_X$ is isomorphic to $\mathbb{H}^3/H$, since $G$ acts trivially on $\mathbb{H}^3$. Since $H$ is a finite index subgroup of $H_{max} = (\Z/4)^6 \rtimes \Gamma^3 \rtimes S_3$, we have a finite holomorphic mapping $C_X \rightarrow C_{max}$, where $C_{max}$ is the quotient $\mathbb{H}^3/H_{max}$. Let us determine $C_{max}$. We already know that $\mathbb{H}/\Gamma \cong \C$, via the $j$-invariant. Since $(\Z/4)^6$ acts trivially on $\mathbb{H}^3$, we are reduced to computing $\mathbb{C}^3/S_3$, \textit{i.e.} the third symmetric power $\C^{(3)}$. But it is well known that $\C^{(3)} \cong \C^3$ (for example, by regarding the $\C^3$ as the vector space of monic degree 3 polynomials, the map sending $(z_1,z_2,z_3) \in \mathbb{C}^3$ to $(z - z_1)(z - z_2)(z - z_3) \in \mathbb{C}^3$ is a quotient map). We remark that $C_{max}$ is Stein. But since $C_X \rightarrow C_{max}$ is a finite holomorphic mapping, we conclude \cite{GR04} that $C_X$ is Stein.
\end{proof}

\begin{prop}
The sheaf $\mathcal{O}_{M_X}$ is acyclic.
\end{prop}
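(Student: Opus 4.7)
The plan is to recognize this as an immediate consequence of Lemma 6.3 applied to the presentation $M_X \cong [\mathbb{H}^3/H']$ furnished by Corollary to Theorem 4.5. Since Lemma 6.4 has already established that the coarse moduli space $C_X = \mathbb{H}^3/H'$ is Stein, the only remaining hypothesis to verify is that $H'$ acts virtually freely on $\mathbb{H}^3$; once this is done, Lemma 6.3 gives $H^i(M_X,\mathcal{O}_{M_X}) = 0$ for $i > 0$ directly.

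First, I would explicitly identify a candidate free-acting finite index subgroup. The remark following Theorem 4.5 already singles out $\Gamma(4)^3 \leq H$, and notes that it injects into $H'$ because $\Gamma(4)^3 \cap G = 1$ (the group $G$ contains only translations of order $2$ together with involutions $-I$, and $-I \not\equiv I \pmod 4$). Regarded this way as a subgroup of $H'$, the action of $\Gamma(4)^3$ on $\mathbb{H}^3$ is the usual one factor by factor, and since $\Gamma(4)$ is torsion-free and acts freely on $\mathbb{H}$, the product $\Gamma(4)^3$ acts freely on $\mathbb{H}^3$.

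Next I would verify that this subgroup has finite index in $H'$. The chain
\[
\Gamma(4)^3 \;\subseteq\; \Gamma(2)^3 \;\subseteq\; A(2)^3 \;\subseteq\; H
\]
together with $A(2) \cong (\Z/2)^2 \times \Gamma(2)$ from Proposition 3.2 shows that $[A(2)^3 : \Gamma(4)^3]$ is finite, since $\Gamma(2)/\Gamma(4)$ is finite. The quotient $H/A(2)^3 = L$ is the finite group computed in Section 3.2, so $\Gamma(4)^3$ has finite index in $H$, and hence its image has finite index in $H' = H/G$.

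With both hypotheses in place, Lemma 6.3 applies with $X = \mathbb{H}^3$, $T = H'$, and $T_0 = \Gamma(4)^3$, yielding $H^i([\mathbb{H}^3/H'], \mathcal{O}) = 0$ for $i > 0$, which via the isomorphism of Corollary to Theorem 4.5 is exactly the acyclicity of $\mathcal{O}_{M_X}$. There is no real obstacle here: all the substantial content has been packaged into Lemmas 6.3 and 6.4, and the only bookkeeping is the elementary index computation above. The mildest care point is ensuring that $\Gamma(4)^3$, which is a subgroup of $H$ rather than a priori of $H'$, descends injectively and with finite index to $H'$, which is precisely what the congruence $-I \not\equiv I \pmod 4$ and the finiteness of $L$ together guarantee.
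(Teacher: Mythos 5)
Your proposal is correct and follows essentially the same route as the paper, which simply combines the ``Stein coarse space implies acyclic structure sheaf'' lemma with the lemma that $C_X=\mathbb{H}^3/H'$ is Stein. Your explicit verification that $H'$ acts virtually freely via the finite-index free subgroup $\Gamma(4)^3$ is a hypothesis the paper leaves implicit, and it is checked correctly.
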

\begin{proof}
By Lemma 6.3, $C_X$ is Stein. By Lemma 6.2, the proposition follows.
\end{proof}
Now we can prove the main theorem of this section.
\begin{thm}
In each case with $h^{2,1} = 3$, we have $\text{Pic}(M_X) \cong H^2(H', \Z)$, where the latter is the second group cohomology of $H'$ with trivial action on $\Z$.
\end{thm}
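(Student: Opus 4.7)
The plan is to run the exponential sheaf sequence on the stack $M_X$ and then identify the integral cohomology of the stack $[\mathbb{H}^3/H']$ with the group cohomology of $H'$ via the contractibility of $\mathbb{H}^3$.

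First I would exploit the isomorphism $M_X \cong [\mathbb{H}^3/H']$ from Corollary 4.7 and the exponential sequence of sheaves on $M_X$,
\begin{equation*}
0 \longrightarrow \underline{\Z}_{M_X} \longrightarrow \mathcal{O}_{M_X} \xrightarrow{\exp(2\pi i \cdot )} \mathcal{O}_{M_X}^{*} \longrightarrow 0,
\end{equation*}
and take its associated long exact cohomology sequence. Since $\mathcal{O}_{M_X}$ is acyclic by Proposition 6.5, the connecting maps induce an isomorphism of the Picard group with integral sheaf cohomology,
\begin{equation*}
\mathrm{Pic}(M_X) \;=\; H^1(M_X, \mathcal{O}_{M_X}^{*}) \;\cong\; H^2(M_X, \underline{\Z}_{M_X}).
\end{equation*}

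Second, I would identify $H^2(M_X, \underline{\Z}_{M_X})$ with the group cohomology $H^2(H', \Z)$. Writing $q: \mathbb{H}^3 \to [\mathbb{H}^3/H']$ for the quotient atlas, cohomology of the constant sheaf on the quotient stack is computed by the Leray--Cartan--Grothendieck spectral sequence
\begin{equation*}
E_2^{p,q} \;=\; H^p\bigl(H',\, H^q(\mathbb{H}^3, \Z)\bigr) \;\Longrightarrow\; H^{p+q}([\mathbb{H}^3/H'], \Z);
\end{equation*}
equivalently, this is the standard equivariant cohomology spectral sequence for the $H'$-action on $\mathbb{H}^3$. Since $\mathbb{H}^3$ is contractible, $H^q(\mathbb{H}^3, \Z) = 0$ for $q > 0$ and $H^0(\mathbb{H}^3, \Z) = \Z$ with trivial $H'$-action. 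The spectral sequence therefore collapses on the $q = 0$ row and gives edge isomorphisms $H^n([\mathbb{H}^3/H'], \Z) \cong H^n(H', \Z)$ for every $n$. Taking $n = 2$ and chaining with the previous step yields the desired isomorphism $\mathrm{Pic}(M_X) \cong H^2(H', \Z)$.

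The main obstacle is the second step: one must verify that, in the analytic stacky setting of \cite{BN05}, cohomology of the constant sheaf $\underline{\Z}$ on $[\mathbb{H}^3/H']$ really does coincide with $H'$-equivariant cohomology of $\mathbb{H}^3$ and hence, by contractibility, with group cohomology of $H'$. The cleanest way is to note that $H'$ acts virtually freely (a finite-index torsion-free subgroup such as $\Gamma(4)^3 \cdot G/G$, used in Remark 4.4, acts freely on $\mathbb{H}^3$), so that $[\mathbb{H}^3/H']$ is presented by an \'etale groupoid whose classifying space is homotopy equivalent to the Borel construction $EH' \times_{H'} \mathbb{H}^3 \simeq BH'$. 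Singular cohomology of this classifying space agrees with the sheaf cohomology of $\underline{\Z}$ on the stack, so $H^2(M_X,\underline{\Z}) \cong H^2(BH',\Z) = H^2(H',\Z)$, completing the proof.
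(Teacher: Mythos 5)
Your proof is correct and follows essentially the same route as the paper: the exponential sequence together with the acyclicity of $\mathcal{O}_{M_X}$ gives $\mathrm{Pic}(M_X) \cong H^2(M_X,\Z)$, and the equivariant-cohomology spectral sequence collapses by contractibility of $\mathbb{H}^3$ to give $H^2(M_X,\Z) \cong H^2(H',\Z)$ (the paper cites \cite{Gro57}, Proposition 5.2.5, for this last identification, where you instead offer the Borel-construction justification). No gaps.
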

\begin{proof}
On $M_X$, we have the exponential sequence of sheaves
$$
\begin{tikzcd}
1 \arrow[r] & \Z \arrow[r] & \mathcal{O}_{M_X} \arrow[r] & \mathcal{O}_{M_X}^* \arrow[r] & 1.
\end{tikzcd}
$$
This sequence induces a long exact cohomology sequence, which in particular contains the following section.
$$
\begin{tikzcd}[column sep=small]
\cdots \arrow[r] &  H^1(M_X, \mathcal{O}_{M_X}) \arrow[r] & H^1(M_X, \mathcal{O}_{M_X}^*) \arrow[r, "c_1"] & H^2(M_X, \Z) \arrow[r] & H^2(M_X, \mathcal{O}_{M_X}) \arrow[r] & \cdots 
\end{tikzcd}
$$
By Proposition 6.4, $H^i(M_X, \mathcal{O}_{M_X}) = 0$ for $i > 0$. Hence by the exactness of the sequence, the map $c_1$ is an isomorphism. So
$$
\text{Pic}(M_X) \cong H^2(M_X, \Z).
$$
Now, $M_X$ is the quotient stack $[\mathbb{H}^3/H']$. Therefore, there is an isomorphism
$$
H^2(M_X, \Z) \cong H^2(\mathbb{H}^3, H', \Z),
$$
where the latter group is the equivariant sheaf cohomology of $\mathbb{H}^3$ with respect to the sheaf $\Z$ and group $H'$. Since $\mathbb{H}^3$ is contractible, we have $H^i(\mathbb{H}^3,\Z) = 0$ for $i > 0$, and hence there is an isomorphism (\cite{Gro57}, Proposition 5.2.5) 
$$
H^2(\mathbb{H}^3, H', \Z) \cong H^2(H', \Z),
$$
\end{proof}
Now we begin to analyze the group $H^2(H', \Z)$. 
\subsection{Group cohomology}  Let us recall some basic elements of group cohomology. A reference for group cohomology is \cite{Br82}. Throughout this subsection, $G$ is an arbitrary group.

Recall that for any group $G$, the group cohomology functor $H^i(G,-): \text{Mod}_G \rightarrow \text{Ab}$ (for $\text{Mod}_G$ the category of $G$ modules, and $\text{Ab}$ the category of abelian groups) is defined as the $i$th right derived functor of $(-)^G: \text{Mod}_G \rightarrow \text{Mod}_G$, which sends a $G$ module $M$ to its $G$ invariant submodule $M^G$ (we may regard $(-)^G$ as a functor into the category of abelian groups, since $G$ acts trivially on $M^G$). Say that $G$ lies in an extension
$$
\begin{tikzcd}
1 \ar[r] & N \ar[r] & G \ar[r] & Q \ar[r] & 1.
\end{tikzcd}
$$
Then for any $G$-module $M$, $M^N$ acquires the structure of a $Q$ module. Indeed, $(-)^G = (-)^Q \circ (-)^N$. Therefore, the Grothendieck spectral sequence in this case becomes
$$
E_2^{p,q} = H^{p}(Q, H^q(N, M)) \implies H^{p + q}(G,M).
$$
This spectral sequence is known as the \textit{Lyndon-Hochschild-Serre} sequence. The associated 5-term exact sequence is 
$$
\begin{tikzcd}[column sep=small]
1 \ar[r] & H^1(Q,M^N) \ar[r] & H^1(G,M) \ar[r] & H^1(N,M)^{Q} \ar[r] & H^2(Q, M^N) \ar[r] & H^2(G,M),
\end{tikzcd}
$$
which is known as the \textit{inflation-restriction} sequence. The map $H^1(K,M^N) \rightarrow H^1(G,M)$ is known as the \textit{inflation map} and $H^1(G,M) \rightarrow H^1(N,M)^Q$ is known as the \textit{restriction map}. 

For any abelian group $M$, we can form $H^i(G,M)$ by equipping $M$ with the trivial $G$ action. If no action of $G$ is specified, this is what we will mean.

We also recall two useful facts. If $G$ acts trivially on $M$, then $H^1(G,M) \cong \text{Hom}(G,M)$. Also, if $M$ is an $G$ module which happens to be a $K$ vector space, where $K$ is a field such that $\text{Char}(K)$ does not divide the order of $G$, then $H^i(G,M) = 0$ for $i > 0$.

\subsection{Computing $H^2(H',\Z)$} In this section, we will carry out the main group theoretic calculation.

\begin{lem}
$H^2(H', \Z)$ is a finitely generated abelian group.
\end{lem}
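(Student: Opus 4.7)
The plan is to locate a natural finite-index normal subgroup $K \leq H'$ whose integral cohomology is visibly finitely generated, and then feed the extension $1 \to K \to H' \to Q \to 1$ into the Lyndon--Hochschild--Serre spectral sequence. The natural candidate is $K = \Gamma(4)^3$, viewed inside $H'$ via the inclusion $\Gamma(4)^3 \hookrightarrow H$ and the fact (already noted in the remark after Theorem 4.3) that $\Gamma(4)^3 \cap G = \{1\}$.

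First I will verify that $\Gamma(4)^3$ sits in $H'$ as a finite-index normal subgroup. Finite index follows at once from the chain $\Gamma(4)^3 \leq A(2)^3 \leq H$, since $[H:A(2)^3] = |L|$ is finite and $[A(2)^3 : \Gamma(4)^3] = 2^6 \cdot [\Gamma(2):\Gamma(4)]^3$ is finite. For normality in $H_{max}$ (and hence in $H$ and in $H'$), the only point that goes beyond $\Gamma(4) \triangleleft \Gamma$ and $S_3$-invariance of the three factors is that conjugation of $(1,\gamma) \in \Gamma(4)^3$ by a translation $(t,1) \in (\Z/4)^6$ gives $(t - \gamma\cdot t,\, \gamma)$; writing $\gamma = I + 4M$ and using that $4 t = 0$ in $T^2 = (\R/\Z)^2$ kills the translation component, leaving $(1,\gamma)$ back in $\Gamma(4)^3$.

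Second I will compute $H^*(\Gamma(4)^3, \Z)$. The principal congruence subgroup $\Gamma(N)$ is torsion-free for $N \geq 3$, so $\Gamma(4)$ acts freely on the contractible upper half-plane $\mathbb{H}$ and $M(4) = \mathbb{H}/\Gamma(4)$ is a $K(\Gamma(4),1)$. As a non-compact Riemann surface of finite type, $M(4)$ has free fundamental group of finite rank, so $H^0(\Gamma(4),\Z) = \Z$, $H^1(\Gamma(4),\Z)$ is free abelian of finite rank, and $H^i(\Gamma(4),\Z) = 0$ for $i \geq 2$. The K\"unneth formula then yields $H^q(\Gamma(4)^3, \Z)$ finitely generated for every $q$.

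Finally, setting $Q := H'/\Gamma(4)^3$ (a finite group), the Lyndon--Hochschild--Serre spectral sequence
\begin{equation*}
E_2^{p,q} = H^p\bigl(Q,\, H^q(\Gamma(4)^3,\Z)\bigr) \;\Longrightarrow\; H^{p+q}(H',\Z)
\end{equation*}
has each $E_2^{p,q}$ finitely generated: the coefficient is a finitely generated abelian group on which the finite group $Q$ acts, and cohomology of a finite group with finitely generated coefficients is always finitely generated (and actually finite for $p > 0$). In particular the three terms with $p+q = 2$ are finitely generated, so $H^2(H', \Z)$, which is an iterated extension of subquotients of these, is finitely generated. The only step that is not a formal invocation of standard machinery is the normality check in the second paragraph; this is the main obstacle and is precisely the reason why one must pass to congruence level $4$ rather than $2$.
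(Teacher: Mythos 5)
Your proof is correct, and it reaches the conclusion by a route that differs from the paper's in a meaningful way, even though both rest on the same general principle (a finite-index normal subgroup with finitely generated integral cohomology, fed into the Lyndon--Hochschild--Serre spectral sequence with finite quotient). The paper works with the subgroup $A(2)^3 \cong (\Z/2)^6 \times F_2^3 \times (\Z/2)^3$, whose normality in $H_{max}$ and containment in $H$ were already established in Proposition 3.2; since $A(2)^3$ contains $G$, this only gives finite generation of $H^*(H,\Z)$, and the paper then needs a second step, descending from $H$ to $H'=H/G$ via the inflation map and the finiteness of $H^i(G,\Z)$ in positive degrees. You instead choose $\Gamma(4)^3$, which meets $G$ trivially and so embeds directly into $H'$, collapsing the argument to a single application of the spectral sequence; the price is that you must verify normality of $\Gamma(4)^3$ in $H_{max}$ yourself (your computation with $\gamma = I + 4M$ acting on $4$-torsion is the right one, and it is indeed the reason level $4$ rather than level $2$ is needed), and you invoke torsion-freeness of $\Gamma(4)$ and the topology of $M(4)$ where the paper only needs the elementary splitting $\Gamma(2)\cong \Z/2\times F_2$. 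Both arguments are complete; yours is slightly more self-contained at the level of the group extension but slightly heavier on input about congruence subgroups.
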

\begin{proof}
We begin with a general statement. Let $N$ be a normal subgroup of a group $K$, with quotient $Q$. Assume that $Q$ is finite, and that the integral cohomology groups of $N$ are finitely generated. Then we show that $H^i(K, \Z)$ is finitely generated. We have the cohomological Hochschild-Serre spectral sequence:
$$
E_2^{p,q} = H^p(Q, H^q(N, \Z)) \implies H^{p + q}(K, \Z)
$$
Since $H^{q}(N,\Z)$ is finitely generated by hypothesis, and $Q$ is finite, the groups $E^{p,q}_2$ are finite for any $p,q \in \Z$. Therefore, $H^{p+q}(K,\Z)$ admits a finite step filtration with finitely generated quotients. It follows that $H^{p+q}(K,\Z)$ is finitely generated (since finitely generated abelian groups form a Serre class). 

Applying the lemma to the sequence
$$
\begin{tikzcd}
1 \arrow[r] & A(2)^3 \arrow[r] & H \arrow[r] & L \arrow[r] & 1, 
\end{tikzcd}
$$
we conclude that $H$ has finitely generated integral cohomology. Then we consider the sequence
$$
\begin{tikzcd}
1 \arrow[r] & G \arrow[r] & H \arrow[r] & H' \arrow[r] & 1. 
\end{tikzcd}
$$
The inflation map $H^i(H',\Z) \rightarrow H^i(H,\Z)$ has cokernel isomorphic to $H^i(G,\Z)^{H'}$, which is a finite group. The kernel is isomorphic to some subgroup of $H^{i-1}(G,\Z)$, which is also finite. Since $H^i(H,\Z)$ is finitely generated, we conclude that $H^i(H',\Z)$ is finitely generated as well.
\end{proof}
The previous lemma allows us to decompose $H^2(H', \Z)$ into a free part, $\Z^{k}$, and a torsion part. By the universal coefficient theorem, we can calculate the rank $k$ as the dimension of the vector space $H^2(H', \Q)$ over $\Q$. For the rest of this section, let $K$ be a field of characteristic not equal to 2 or 3.

\begin{lem}
The inflation map $H^i(H', K) \rightarrow H^i(H, K)$ is an isomorphism for all $i$.
\end{lem}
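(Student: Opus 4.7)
The plan is to apply the Lyndon--Hochschild--Serre spectral sequence to the extension
$$
\begin{tikzcd}
1 \ar[r] & G \ar[r] & H \ar[r] & H' \ar[r] & 1,
\end{tikzcd}
$$
with coefficients in the trivial module $K$, and show that it degenerates on the vertical axis.

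First, I would recall that the spectral sequence takes the form
$$
E_2^{p,q} = H^p\bigl(H', H^q(G, K)\bigr) \Longrightarrow H^{p+q}(H, K),
$$
and that the edge map $E_2^{i,0} \to H^i(H,K)$ is precisely the inflation map attached to the quotient $H \twoheadrightarrow H'$. So it suffices to show that $E_2^{p,q} = 0$ for $q > 0$ and that the $H'$-action on $H^0(G,K) = K$ is trivial.

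The second point is immediate: $H$ acts trivially on $K$ by assumption, so $H/G = H'$ acts trivially on $K^G = K$, giving $E_2^{p,0} = H^p(H', K)$. For the vanishing of the higher rows, recall from Section 2.1 that $G \cong (\mathbb{Z}/2)^{r+2}$ is a finite abelian $2$-group of order dividing $2^8$. Since $\operatorname{Char}(K) \ne 2,3$, the order $|G|$ is invertible in $K$, and hence by the standard fact recalled at the end of Section 6.2 we have $H^q(G, K) = 0$ for all $q > 0$. Thus $E_2^{p,q} = 0$ whenever $q > 0$.

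The spectral sequence therefore degenerates at $E_2$, and the only nonzero column gives the isomorphism
$$
\operatorname{inf}\colon H^i(H', K) \stackrel{\sim}{\longrightarrow} H^i(H, K)
$$
as claimed. There is no serious obstacle; the only thing to watch is that one is applying the cohomological (not homological) LHS sequence and that $K$ is being treated as a trivial coefficient module, so that both edge identifications are the tautological ones.
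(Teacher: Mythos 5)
Your proof is correct and follows essentially the same route as the paper: both arguments rest on the vanishing $H^q(G,K)=0$ for $q>0$ (since $|G|=2^{r+2}$ is invertible in $K$) applied to the extension $1 \to G \to H \to H' \to 1$, the paper phrasing it via the inflation--restriction sequence and you via the full Lyndon--Hochschild--Serre spectral sequence from which that sequence is derived. If anything, your version spells out the degeneration and edge-map identification slightly more explicitly, but the mathematical content is identical.
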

\begin{proof}
Since the order of $|G|$ ($2^{r+2}$) is invertible in $K$, we have $H^i(G, K) = 0$ for $i > 0$. There is an inflation-restriction exact sequence
$$
1 \rightarrow H^i(H', K) \rightarrow H^i(H,K) \rightarrow H^i(G, K)^{H'} \rightarrow \cdots
$$
By the vanishing of the cohomology of $G$, the claim follows. 
\end{proof}
Note that $\Gamma(2)$ contains a free subgroup $F_2$ on two generators \cite{Sa47}, generated by
$$
f_1 = \begin{pmatrix}
1 & 2 \\
0 & 1
\end{pmatrix}, \ f_2 = \begin{pmatrix}
1 & 0 \\
2 & 1
\end{pmatrix}.
$$
From now on, $F_2$ denotes this particular free subgroup.  In fact, $\Gamma(2) \cong \Z/2 \times F_2$, where $\Z/2$ is generated by $-I$.
\begin{lem}
Recall that $H$ lies in an extension
$$
\begin{tikzcd}
1 \ar[r] & A(2)^3 \ar[r]  & H \ar[r] & L \ar[r] & 1.
\end{tikzcd}
$$
The Lyndon-Hochschild-Serre spectral sequence associated to this exact sequence induces an isomorphism $H^2(H, K) \cong H^0(L, H^2(A(2)^3, K))$.
\end{lem}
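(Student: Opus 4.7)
The plan is to apply the Lyndon-Hochschild-Serre spectral sequence
\[
E_2^{p,q} \;=\; H^p\!\left(L,\, H^q(A(2)^3, K)\right) \;\Longrightarrow\; H^{p+q}(H, K)
\]
and show that it degenerates at the $E_2$-page, so that $H^2(H, K) = E_\infty^{0,2} = E_2^{0,2} = H^0(L, H^2(A(2)^3, K))$.

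The key input is that $L$ is a finite group whose order is invertible in $K$. First I would observe that $L$ sits inside $L_{\max} = (\Z/2)^6 \rtimes S^3 \rtimes S_3$, so its order divides $|L_{\max}| = 2^{6} \cdot |S|^3 \cdot |S_3| = 2^{10} \cdot 3^{4}$. The only prime divisors of $|L|$ are therefore $2$ and $3$. Since $\operatorname{Char}(K) \neq 2, 3$ by hypothesis, $|L|$ is invertible in $K$.

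Next I would invoke the standard vanishing recalled at the end of Section 6.2: if a finite group acts on a $K$-vector space and $|L|$ is invertible in $K$, then its higher cohomology vanishes. The coefficient groups $H^q(A(2)^3, K)$ are $K$-vector spaces in a natural $L$-module structure, so this gives $E_2^{p,q} = H^p(L, H^q(A(2)^3, K)) = 0$ for every $p > 0$. Thus the $E_2$-page is concentrated on the column $p = 0$, all differentials $d_r$ for $r \geq 2$ vanish identically (source or target is zero), and the sequence collapses.

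Reading off the abutment on the line $p + q = 2$ yields $H^2(H, K) \cong E_\infty^{0,2} = E_2^{0,2} = H^0(L, H^2(A(2)^3, K))$, which is the desired statement. There is no serious obstacle here — this is essentially Maschke's theorem packaged into the Hochschild-Serre spectral sequence. The only nontrivial thing to check is the bound on the primes dividing $|L|$, which is immediate from the description of $L_{\max}$.
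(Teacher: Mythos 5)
Your proposal is correct and is essentially the paper's own argument: both invoke the Lyndon--Hochschild--Serre spectral sequence, note that $|L|$ divides $|L_{max}|$ whose only prime factors are $2$ and $3$ (your count $2^{10}\cdot 3^4$ is in fact the correct one; the paper's $2^{10}3^2$ is a slip, immaterial to the conclusion), and conclude that $E_2^{p,q}$ vanishes for $p>0$ since the coefficients are $K$-vector spaces. Nothing further is needed.
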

\begin{proof}
The terms of this spectral sequence are
$$
E_2^{p,q} = H^p(L, H^q(A(2)^3, K)) \implies H^{p+q}(H,K).
$$
$L$ is a subgroup of $L_{max} = (\Z/2)^6 \rtimes S^3 \rtimes S_3$, which has order $2^{10} 3^{2}$. Therefore, $|L|$ is invertible in $K$. Furthermore, $H^q(A(2)^3, K)$ is a $K$ vector space, so we have the vanishing of $E_2^{p,q}$ for $p > 0$. Hence, there is an isomorphism
$$
H^2(H, K) \cong H^0(L, H^2(A(2)^3, K)) \cong H^2(A(2)^3, K)^L.
$$
\end{proof}

\begin{lem}
The restriction map $\rho: H^2(A(2)^3, K) \rightarrow H^2(F_2^3, K)$ is an isomorphism. Furthermore, let $p: L_{max} \rightarrow S^3 \rtimes S_3$ be the quotient by $(\Z/2)^6$. Then, there is an induced action of $L_0 := p(L)$ on $H^2(F_2^3, K)$, so that 
$$
H^2(A(2)^3, K)^{L} \cong H^2(F_2^3, K)^{L_0},
$$
under the previously mentioned isomorphism $\rho$.
\end{lem}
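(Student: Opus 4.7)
My plan is to decompose $A(2)^3$ as a direct product $N \times F$ with $N = (\Z/2)^9$ and $F = F_2^3$, then apply the K\"unneth formula to collapse the cohomology to $H^2(F, K)$. Combining Proposition 3.2's isomorphism $A(2) \cong (\Z/2)^2 \times \Gamma(2)$ with the direct factorization $\Gamma(2) \cong \langle -I\rangle \times F_2$ recalled just above yields $A(2)^3 \cong N \times F$ as a direct product of groups. Since $\mathrm{char}(K) \neq 2$, the finite $2$-group $N$ satisfies $H^i(N, K) = 0$ for $i > 0$, so the K\"unneth formula will give $H^2(A(2)^3, K) \cong H^2(F, K)$. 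To identify this with $\rho$, I will use that the inclusion $i_F: F \hookrightarrow A(2)^3$ and projection $p_F: A(2)^3 \twoheadrightarrow F$ provided by the direct-product structure satisfy $p_F \circ i_F = \mathrm{id}_F$, so $\rho = i_F^*$ is inverse to the K\"unneth map $p_F^*$ and is in particular an isomorphism.

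For the equivariance, I would first handle $L$-equivariance of $\rho$ in a form that sidesteps whether $L$ preserves the subgroup $F$. The observation is that $F = F_2^3$ is torsion-free, so $N$ coincides with the full $2$-torsion subgroup of $A(2)^3$ and is therefore preserved by every automorphism, in particular by the conjugation action of $L \cong H/A(2)^3$. Hence this conjugation descends to an action on the quotient $A(2)^3/N \cong F$; inflation $p_F^*$ along the quotient map is $L$-equivariant for this action, and so is its inverse $\rho$. Thus it suffices to describe the $L$-action on $H^2(F, K)$ as the action coming from the induced action on the quotient $F$.

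The heart of the proof is to show this $L$-action on $F$ factors through $L_0 = p(L)$. The kernel of $L \to L_0$ is represented by classes of translations $\epsilon' \in (\Z/4)^6 \subset H_{max}$, so it suffices to show every such $\epsilon'$ acts trivially on $F$. Using the explicit conjugation formula from the proof of Proposition 3.2, conjugation by $\epsilon'$ sends $(\gamma, \epsilon) \in A(2)$ to $(\gamma, \epsilon + (I - \gamma)\epsilon')$, and because $\gamma \in \Gamma(2)$ fixes $2$-torsion, $2(I - \gamma)\epsilon' = (I - \gamma)(2\epsilon') = 0$, so $(I-\gamma)\epsilon' \in (\Z/2)^2 \subset N$. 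Thus conjugation by $\epsilon'$ is trivial modulo $N$, the induced automorphism of $F$ is the identity, and taking $L$-invariants yields $H^2(A(2)^3, K)^L = H^2(F, K)^L = H^2(F, K)^{L_0}$.

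The main obstacle is this last step, where one must carefully track the conjugation formula and exploit the defining property of $\Gamma(2)$ as the subgroup of $\Gamma$ fixing $2$-torsion; it is precisely this property, combined with the fact that the translation part of $H_{max}$ is $(\Z/4)^6$ and not a larger group, that forces $(I - \gamma)\epsilon'$ into $N$ and makes the whole argument go through.
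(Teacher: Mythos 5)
Your proposal is correct and follows essentially the same route as the paper: both rest on the direct-product decomposition $A(2)^3 \cong (\Z/2)^9 \times F_2^3$ (the paper's proof writes $(\Z/2)^6$, an evident slip), the vanishing of higher cohomology of the $2$-group factor since $\mathrm{char}\,K \neq 2$, and the key computation that conjugating $(\gamma,\epsilon) \in A(2)$ by $\epsilon' \in (\Z/4)^2$ perturbs it only by the $2$-torsion translation $(I-\gamma)\epsilon'$. The only (harmless) difference in packaging is that you establish triviality of the $\ker(L \to L_0)$-action on the quotient group $F = A(2)^3/N$ itself, using that $N$ is the full torsion subgroup and hence characteristic, whereas the paper transports the action through $\rho$ and checks triviality directly on $H^1(F_2,K) \cong \mathrm{Hom}(F_2,K)$ after a K\"unneth reduction; your version is marginally more structural and handles the equivariance of $\rho$ more carefully than the paper's one-line transport-of-structure remark.
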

\begin{proof}
For the first claim, we know that $A(2)^3 \cong (\Z/2)^6 \times F_2^3$. Since $\text{Char} \ K \neq 2$, the restriction map $\rho$ is an isomorphism. Via this isomorphism, we can define an action of $L_{max}$ on $H^2(F_2^3, K)$. For the second claim, we must show that $(\Z/2)^6 \leq L_{max}$ acts trivally. Firstly, we use the K\"unneth formula to decompose $H^2(F_2^3, K)$.
\begin{equation}
H^2(F_2^3, K) \cong \bigoplus_{i+j+k=3} H^i(F_2, K) \otimes H^j(F_2, K) \otimes H^k(F_2, K)
\end{equation}
Note that $H^2(F_2, K) = 0$. Therefore, it suffices to show that $(\Z/2)^2$ acts trivially on $H^1(F_2, K)$. An element of the latter group is a homomorphism $\phi: F_2 \rightarrow K$. Now we unwind the definition of the action of $(\Z/2)^2 \leq L_{max}$ on $H^1(F_2, K) \cong \text{Hom}(F_2, K)$. Firstly, we extend $\phi$ trivially to a homomorphism $\widetilde{\phi}: A(2) \rightarrow K$ by defining it to vanish on $(\Z/2)^2 \leq A(2)$. Then, for $\epsilon \in (\Z/2)^2 \leq L_{max}$, we have $\epsilon \cdot \phi = \widetilde{\phi}(\epsilon \cdot (-) \cdot \epsilon^{-1})$. But $\widetilde{\phi}(\epsilon) = 0$ by construction, so the claim follows. Therefore, the action of $L$ on $H^2(F_2^3, K)$ factors through an action of $L_0$. 
\end{proof}

By the formula (3), we have $H^2(F_2^3, K) \cong K^{12}$. Our problem is now reduced to calculating the $L_0$ invariant submodule of $M = H^2(F_2^3, K)$. To do this, let us explain in detail how $L_0 \leq S^3 \rtimes S_3$ acts on $M$. 

Let us first work out the action of $S$ on $H^1(F_2, K)$. To fix notation, let $$
s = \begin{pmatrix}
0 & 1 \\
-1 & 0
\end{pmatrix}, 
t = \begin{pmatrix}
1 & 1 \\
0 & 1
\end{pmatrix}, 
r = \begin{pmatrix}
1 & 0 \\
1 & 1
\end{pmatrix}.
$$
So, $\bar{s},\bar{t},\bar{r} \in S$ generate the subgroups $B_3$, $B_1$, and $B_2$ respectively. We introduced before the elements $f_1$ and $f_2$ generating $F_2$. We now define a basis $\{ \hat{f}_1, \hat{f}_2 \}$ for $H^1(F_2, K)$, where $\hat{f}_{i}(f_j) = \delta_{ij} \in K$. So, we can work out the action of the elements $\bar{s},\bar{t},\bar{r}$ on $\hat{f}_i$. Firstly, it is easy to check that the following relations hold.
$$
\begin{matrix}
sf_1s^{-1} = f_2^{-1}, & tf_1t^{-1} = f_1, & rf_1r^{-1} = -f_2f_1^{-1}, \\
sf_2s^{-1} = f_1^{-1}, & tf_2t^{-1} = -f_1, f_2^{-1} & rf_2r^{-1} = f_2.
\end{matrix}
$$
From now on, we will abuse notation and refer to $\bar{s},\bar{t},\bar{r}$ as $s,t,r$. Then, using the equations above, we have the following.
$$
\begin{matrix}
s \cdot \hat{f}_1 = -\hat{f}_2 & t \cdot \hat{f}_1 = \hat{f}_1 & r \cdot \hat{f}_1 = \hat{f}_2 - \hat{f}_1\\
s \cdot \hat{f}_2 = -\hat{f}_1 & t \cdot \hat{f}_2 = \hat{f}_1 - \hat{f}_2 & r \cdot \hat{f}_2 = \hat{f}_2
\end{matrix}
$$
So, we have described the $S$ module structure on $H^1(F_2, K)$. $S$ acts trivially on $H^0(F_2, K) = K$. Then, each term $H^i(F_2, K) \otimes H^j(F_2, K) \otimes H^k(F_2, K)$ (which we denote by $H^{i,j,k}$) in the decomposition (3) becomes an $S^3$ module via the external tensor product. $H^2(F_2^3, K)$ is then the direct sum of these representations. Lastly, $S_3 \leq S^3 \rtimes S_3$ acts by permutation of the three summands in (3). Now we come to the main theorem of this section.

\begin{thm}
The group $H^2(H', K)$ is given in Table 2 in each of the ten cases with $h^{2,1}(X) = 3$.
\end{thm}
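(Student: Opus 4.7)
The plan is to compute $H^2(H', K) \cong H^2(F_2^3, K)^{L_0}$ case by case, using the isomorphisms established in the preceding lemmas and the explicit $L_0$-action on $M := H^2(F_2^3, K)$. By the K\"unneth formula,
$$M \;=\; H^{1,1,0} \oplus H^{1,0,1} \oplus H^{0,1,1},$$
where each summand is of the form $V \otimes V$ (with $V = H^1(F_2,K)$ in two slots and the scalar $K$ in the third), so $\dim_K M = 12$. The factor $S^3 \le L_0 \le S^3 \rtimes S_3$ acts on each tensor slot by the explicit $S$-action already recorded, while the $S_3$ factor permutes the three slots and therefore permutes both the $V$-factors and the three summands.

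First I would tabulate, once and for all, the basic invariants in $V$ under each subgroup that arises: from the matrices for $s, t, r$ one finds $V^{B_1} = K\hat f_1$, $V^{B_2} = K\hat f_2$, $V^{B_3} = K(\hat f_1 - \hat f_2)$, and $V^S = 0$. Together with the tensor product, these determine $(V\otimes V)^{H_1 \times H_2}$ for every pair $H_1, H_2$ of subgroups in $\{1, B_i, S\}$.

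For each of the nine cases in Table~1 where $L_0$ is a semi-direct product $N \rtimes P$ with $N \le S^3$ and $P \le S_3$, the invariants are computed in two steps: first compute $M^N$ summand by summand from the tensor-product table, then take $P$-invariants on the resulting representation, keeping track of how $P$ permutes both the summands and the slots. For instance, case (0-1) has $N = S^3$, so every summand already has zero $N$-invariants because $V^S = 0$, giving $M^{L_0} = 0$; case (0-4) has $N = B_1^3$, so each summand contributes a one-dimensional line and $S_3$ cyclically permutes these three lines, yielding $\dim M^{L_0} = 1$. The remaining seven semi-direct-product cases follow the same recipe, with a little extra bookkeeping for the mixed cases (1-11), (2-12), and (3-5), where different slots carry different $B_i$'s or trivial factors.

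The main obstacle is case (4-1), where $L_0$ is a non-split extension $1 \to N \to L_0 \to S_3 \to 1$ with $N \le S^3$ not the diagonal but a copy of $S$ such that each coordinate projection $N \to S$ is an isomorphism. For this case I would first pin down the concrete embedding $N \hookrightarrow S^3$ as the graph of a suitable pair of automorphisms of $S$, use it to compute $M^N$ directly from the induced $S$-action on each $V\otimes V$ summand, and then determine the resulting $S_3$-action on the finite-dimensional space $M^N$ from the extension data and take invariants. Assembling the ten dimensions produces Table~2; by the universal coefficient theorem applied to this calculation (valid for any $K$ of characteristic prime to $6$), the free rank of $H^2(H', \mathbb{Z})$ equals $\dim_{\mathbb Q} H^2(H', \mathbb Q)$ and is therefore determined by the same computation.
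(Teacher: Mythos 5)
Your overall strategy is the same as the paper's: reduce to computing $H^2(F_2^3,K)^{L_0}$ via the preceding lemmas, decompose by K\"unneth, compute the invariants under the normal part $N \le S^3$ summand by summand, and then take invariants under the permutation part. Your tabulation $V^{B_1}=K\hat{f}_1$, $V^{B_2}=K\hat{f}_2$, $V^{B_3}=K(\hat{f}_1-\hat{f}_2)$, $V^{S}=0$ is correct, and the external-product formula $(V_1\otimes V_2)^{H_1\times H_2}=V_1^{H_1}\otimes V_2^{H_2}$ does dispose of the cases where the effective action of $N$ on a given summand really is a product of slot-wise subgroups; there it reproduces the paper's answers (the paper instead writes out a general element of each summand and imposes invariance under explicit generators, which amounts to the same linear algebra).

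The concrete gap is case (2-1). There $L_0=S\rtimes S_3$ with $N$ the \emph{diagonal} copy of $S$ in $S^3$, which is not a product subgroup, so it is not covered by your tensor-product table; yet you file it among "the remaining seven semi-direct-product cases" that "follow the same recipe," reserving special treatment only for (1-11), (2-12), (3-5) and (4-1). Reading the diagonal off the table as $V^{S}\otimes V^{S}=0$ would give $M^{L_0}=0$, whereas the correct answer is $1$: $V$ is the irreducible $2$-dimensional representation of $S\cong S_3$ (character $2,0,-1$) and is self-dual, so $(V\otimes V)^{S}\cong \mathrm{Hom}_S(V^{*},V)\cong K$ by Schur's lemma (or by the character computation $\tfrac16(4+3\cdot 0+2\cdot 1)=1$); each K\"unneth summand then contributes a line and the $S_3$-invariants of the three permuted lines are $1$-dimensional, as in Table 2. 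The same observation makes your (4-1) plan — which you describe but do not execute — concrete: since every automorphism of $S\cong S_3$ is inner, the twisted-diagonal $N$ acts on each summand through a representation isomorphic to $V\otimes V$ with the diagonal action, so again each summand contributes one invariant line and the final answer is $1$. With the diagonal cases handled this way, your argument agrees with the paper's proof.
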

\begin{table}[ht]
    \centering
    \begin{tabular}{c|c}
        Case & $\text{dim}_K(H^2(H',K))$ \\
        \hline
        (0-1) & 0 \\
        (0-4) & $1$ \\
        \hline
        (1-1) & $1$ \\
        (1-5) & $1$ \\
        (1-11) & $2$ \\
        \hline
        (2-1) & $1$ \\
        (2-9) & $1$ \\
        (2-12) & $3$ \\
        \hline
        (3-5) & $1$ \\
        \hline
        (4-1) & $1$ \\
    \end{tabular}
    \caption{Cohomology of $H'$ with coefficients in $K$}
    \label{tab:my_label2}
\end{table}

\begin{proof}
By the previous lemmas, we are reduced to computing $H^2(F_2^3, K)^{L_0}$. The previous remark described the $L_0$ module structure on $H^2(F_2^3, K)$. We will break into cases, using the results of section 3.2 on the form of $L_0$. In all of the ten cases, $L_0$ is an extension of a group $T \leq S_3$ by a normal subgroup $N \leq S^3$. Therefore, to find the $L_0$ invariants, we will firstly determine the $N$ invariant submodule of $H^2(F_2^3, K)$, and subsequently the $T$ invariant submodule of the $H^2(F_2^3, K)^N$.\\ 

As we've said, $N$ acts on each term of the sum (3). A general element of $H^{1,1,0}$ has the form
$$
f = \sum_{i,j=1}^{2} \lambda_{ij} \hat{f}_i \otimes \hat{f}_j \otimes 1.
$$
We retain the $1$ in the expression above in order to distinguish the summands $H^{1,1,0}$, $H^{1,0,1}$, and $H^{0,1,1}$. For concreteness let us work with this factor $H^{1,1,0}$. We will write down some general results that will be used in each case. Namely, we determine the general form of such an element $f$ which is invariant under $(s,1,1)$, $(1,s,1)$, $(t,1,1)$, $(1,t,1)$, $(r,1,1)$, $(1,r,1)$, $(s,s,1)$, $(r,r,1)$, or $(t,t,1)$ respectively. In order, the elements $f$ are given by
$$
\lambda_{11} \hat{f}_1 \otimes \hat{f}_1 \otimes 1 + \lambda_{12} \hat{f}_1 \otimes \hat{f}_2 \otimes 1 - \lambda_{11} \hat{f}_2 \otimes \hat{f}_1 \otimes 1 - \lambda_{12} \hat{f}_2 \otimes \hat{f}_2 \otimes 1,
$$
$$
\lambda_{11} \hat{f}_1 \otimes \hat{f}_1 \otimes 1 - \lambda_{11} \hat{f}_1 \otimes \hat{f}_2 \otimes 1 + \lambda_{21} \hat{f}_2 \otimes \hat{f}_1 \otimes 1 - \lambda_{21} \hat{f}_2 \otimes \hat{f}_2 \otimes 1,
$$
$$
\lambda_{11} \hat{f}_1 \otimes \hat{f}_1 \otimes 1 +  \lambda_{12} \hat{f}_1 \otimes \hat{f}_2 \otimes 1,
$$
$$
\lambda_{11} \hat{f}_1 \otimes \hat{f}_1 \otimes 1 + \lambda_{21} \hat{f}_2 \otimes \hat{f}_1 \otimes 1, 
$$
$$
\lambda_{21} \hat{f}_2 \otimes \hat{f}_1 \otimes 1 + \lambda_{22} \hat{f}_2 \otimes \hat{f}_2 \otimes 1,
$$
$$
\lambda_{12} \hat{f}_1 \otimes \hat{f}_2 \otimes 1 +  \lambda_{22} \hat{f}_2 \otimes \hat{f}_2 \otimes 1,
$$
$$
\lambda_{11} \hat{f}_1 \otimes \hat{f}_1 \otimes 1 + \lambda_{12} \hat{f}_1 \otimes \hat{f}_2 \otimes 1 + \lambda_{12} \hat{f}_2 \otimes \hat{f}_1 \otimes 1 + \lambda_{11} \hat{f}_2 \otimes \hat{f}_2 \otimes 1,
$$
$$
\lambda_{11} \hat{f}_1 \otimes \hat{f}_1 \otimes 1 + \lambda_{12} \hat{f}_1 \otimes \hat{f}_2 \otimes 1 + \lambda_{12} \hat{f}_2 \otimes \hat{f}_1 \otimes 1 - 2\lambda_{12} \hat{f}_2 \otimes \hat{f}_2 \otimes 1,
$$
$$
-2\lambda_{12} \hat{f}_1 \otimes \hat{f}_1 \otimes 1 + \lambda_{12} \hat{f}_1 \otimes \hat{f}_2 \otimes 1 + \lambda_{12} \hat{f}_2 \otimes \hat{f}_1 \otimes 1 + \lambda_{22} \hat{f}_2 \otimes \hat{f}_2 \otimes 1.
$$
Checking these expressions is easy, if tedious. Now we are ready for a case by case analysis.\\

\textit{Case (0-1)}. $L_0 = S^3 \rtimes S_3$. We begin by finding the $S^3$ invariant elements of $H^{1,1,0}$. Imposing invariance under $(s,1,1)$ and $(1,s,1)$ implies that an invariant $f$ has the form
$$
f = \lambda ( \hat{f}_1 \otimes \hat{f}_1 \otimes 1 -\hat{f}_1 \otimes \hat{f}_2 \otimes 1 - \hat{f}_2 \otimes \hat{f}_1 \otimes 1 + \hat{f}_2 \otimes \hat{f}_2 \otimes 1).
$$
But then invariance under $(t,1,1)$ requires $\lambda_{21} = 0$, so $f = 0$. The same analysis applies to $H^{1,0,1}$ and $H^{0,1,1}$. Therefore, $H^2(A(2)^3,K)^L = 0$. \\

\textit{Cases (0-4), (1-1), (2-9)}. In each case, $L_0$ has the form $B_i^3 \rtimes S_3$. We may assume without loss of generality that $i = 3$ by relabelling basis elements, allowing us to consider the three cases at once. We begin by computing the $B_3^3$ invariant submodule of $H^{1,1,0}$. Imposing invariance under $(s,1,1)$ and $(1,s,1)$ yields that a general invariant $f$ has the form
$$
f = \lambda(\hat{f}_1 \otimes \hat{f}_1 \otimes 1 -\hat{f}_1 \otimes \hat{f}_2 \otimes 1 - \hat{f}_2 \otimes \hat{f}_1 \otimes 1 + \hat{f}_2 \otimes \hat{f}_2 \otimes 1),
$$
for any $\lambda \in K$. Applying the same analysis to $H^{1,0,1}$ and $H^{0,1,1}$ allows us to conclude that $B_3^3$ invariant submodule of $H^2(H', K)$ has the form $K \oplus K \oplus K$. Then $S_3$ acts to permute these factors (sending e.g. $\hat{f}_1 \otimes \hat{f}_1 \otimes 1$ to $\hat{f}_1 \otimes 1 \otimes \hat{f}_1$), so we conclude that $H^2(A(2)^3, K) = K$, the diagonal. \\

\textit{Case (2-12)}. $L_0 = (1 \times B_1^2) \rtimes S_3$. We begin by computuing the $1 \times B_1^2$ invariant submodule of $H^{1,1,0}$. We must impose invariance under $(1,t,1)$. A general invariant $f$ then has the form
$$
f = \lambda_{11} \hat{f}_1 \otimes \hat{f}_1 \otimes 1 + \lambda_{21} \hat{f}_2 \otimes \hat{f}_1 \otimes 1.
$$
Therefore the $1 \times B_1^2$ invariant submodule of $H^{1,1,0}$ has dimension 2. The same analysis applies to $H^{1,0,1}$. However, for $H^{0,1,1}$, we must impose invariance under both $(1,t,1)$ and $(1,1,t)$. Therefore a general invariant of $H^{0,1,1}$ has the form
$$
f = \lambda(1 \otimes \hat{f}_1 \otimes \hat{f}_1 - 1 \otimes \hat{f}_1 \otimes \hat{f}_2 - 1 \otimes \hat{f}_2 \otimes \hat{f}_1 + 1 \otimes \hat{f}_2 \otimes \hat{f}_2).
$$
So, the $1 \times B_1^2$ invariants of $H^{0,1,1}$ have dimension 1. Now, $S_2$ acts to permute the factors $H^{1,1,0}$ and $H^{1,0,1}$. Therefore, $H^2(A(2)^3,K)^{L} = K^2 \times K = K^3$. \\

\textit{Case (2-1)}. $L_0 = S \rtimes S_3$. Here $S \leq S^3$ is the diagonal. We start by finding the $S$ invariant submodule of $H^{1,1,0}$. We impose invariance under $(s,s,s)$, $(t,t,t)$, and $(r,r,r)$. A general invariant element $f$ must have the form
$$
f = \lambda(-2\hat{f}_1 \otimes \hat{f}_1 \otimes 1 + 1 \otimes \hat{f}_1 \otimes \hat{f}_2 \otimes 1 + \hat{f}_2 \otimes \hat{f}_1 \otimes 1 -  2 \hat{f}_2 \otimes \hat{f}_2 \otimes 1).
$$
The same analysis applies to finding the $S$ invariants of $H^{1,0,1}$ and $H^{0,1,1}$. They have the same form. So, the $S$ invariant submodule of $H^2(A(2)^3)$ has the form $K \oplus K \oplus K$, and $S_3$ permutes these factors. Therefore the $S_3$ invariants are the diagonal, and $H^2(A(2)^3, K)^L = K$.\\

\textit{Cases (1-5), (3-5)}. $L_0 = \widetilde{B}_i \rtimes S_3$. We may assume without loss of generality that $i = 3$. This case is actually identical to that of (0-4), (1-1), (2-9). Indeed, to find the $\widetilde{B}_3$ invariant submodule of $H^{1,1,0}$ we need to impose invariance under $(s,1,s)$ and $(1,s,s)$. Since these elements act in the same way as $(s,1,1)$ and $(1,s,1)$ on $H^{1,1,0}$, the $\widetilde{B}_3$ invariants are the same as the $B_3^2$ invariants. Proceeding with the same analysis as in those former cases, we find that $H^2(A(2)^3, K)^L = K$.\\

\textit{Case (1-11)}. $L_0 = (B_2^2 \times B_1) \rtimes S_2$. We start by finding the $B_2^2 \times B_1$ invariants of $H^{1,1,0}$. By imposing invariance under $(r,1,s)$ and $(1,r,s)$ we see that a general invariant element $f$ has the form
$$
f = \lambda \hat{f}_2 \otimes \hat{f}_2 \otimes 1.
$$
Now, we find that $B_2^2 \times B_1$ invariants of $H^{0,1,1}$. We impose invariance under $(1,t,1)$ and $(1,1,r)$. Invariance under $(1,t,1)$ yields $\lambda_{21} = \lambda_{22} = 0$. Invariance under $(1,1,r)$ yields $\lambda_{11} = 0$. So, a general invariant element has the form
$$
f = \lambda (1 \otimes \hat{f}_1 \otimes \hat{f}_2).
$$
The same analysis applies to $H^{1,0,1}$. So, the $B_2^2 \times B_1$ invariant submodule of $H^2(A(2)^3, K)$ has the form $K \oplus K \oplus K$. $S_2$ acts to permute the last two factors, so we conclude that $H^2(A(2)^3, K)^L \cong K^2$. \\

\textit{Case (4-1)}. In this case, $L_0$ is not a semidirect product, but we explained earlier that it lies in an extension of $S_3$ by a group $N$. $N$ is generated by the elements $(t,s,r)$, $(s,r,t)$, and $(r,t,s)$. We begin by finding the $N$ invariants in $H^{1,1,0}$. Imposing invariance under $(t,s,r)$ and $(s,r,t)$ forces an element $f$ to have the form 
$$
f = \lambda( \hat{f}_1 \otimes \hat{f}_1 \otimes 1 - 2\hat{f}_1 \otimes \hat{f}_2 \otimes 1 + \hat{f}_2 \otimes \hat{f}_1 \otimes 1 + \hat{f}_2 \otimes \hat{f}_2 \otimes 1).
$$
It is then easy to show that such an $f$ is already invariant under $(s,r,t)$. The same analysis applies to $H^{1,0,1}$ and $H^{0,1,1}$. Therefore the $N$ invariant submodule of $H^2(A(2)^3, K)$ has the form $K \oplus K \oplus K$, and $S_3$ acts to permute the three factors. Therefore, $H^2(A(2)^3, K)^L = K$. This concludes the proof of the claimed results.
\end{proof}

\begin{cor}
$H^2(H',\Z)$ has no $p$ torsion for $p > 3$.
\end{cor}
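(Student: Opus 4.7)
The plan is to leverage the fact that the preceding theorem computes $\dim_K H^2(H', K)$ for \emph{every} field $K$ of characteristic not $2$ or $3$, and that the dimensions recorded in Table~\ref{tab:my_label2} do not depend on the choice of such $K$. In particular, for every prime $p > 3$, applying the theorem once over $\Q$ and once over $\mathbb{F}_p$ yields the equality
$$\dim_{\mathbb{F}_p} H^2(H', \mathbb{F}_p) \; = \; \dim_{\Q} H^2(H', \Q).$$
Combined with the finite generation of $H^2(H',\Z)$ established earlier, this single coincidence of Betti numbers in two characteristics is already enough to force the vanishing of the $p$-primary torsion.

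To convert the equality of dimensions into a torsion statement I would use the Bockstein long exact sequence associated to the short exact sequence of trivial coefficient modules
$$0 \to \Z \xrightarrow{\,\cdot p\,} \Z \to \mathbb{F}_p \to 0,$$
whose relevant segment
$$H^2(H',\Z) \xrightarrow{\,\cdot p\,} H^2(H',\Z) \to H^2(H',\mathbb{F}_p) \to H^3(H',\Z) \xrightarrow{\,\cdot p\,} H^3(H',\Z)$$
breaks into the short exact sequence
$$0 \to H^2(H',\Z)/p \to H^2(H',\mathbb{F}_p) \to H^3(H',\Z)[p] \to 0.$$
Writing $H^2(H',\Z) \cong \Z^r \oplus T$ with $T$ the finite torsion subgroup, one has $\dim_{\mathbb{F}_p}\bigl(H^2(H',\Z)/p\bigr) = r + \dim_{\mathbb{F}_p} T[p]$ (using that $T/pT \cong T[p]$ for finite abelian $T$), while $\dim_{\Q} H^2(H',\Q) = r$. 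The equality of dimensions from the first paragraph then forces $T[p] = 0$ (and, as a bonus, also $H^3(H',\Z)[p] = 0$), which is precisely the statement that $H^2(H',\Z)$ has no $p$-torsion for $p > 3$.

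No serious obstacle arises: once the preceding theorem is in hand, the argument is essentially a one-line comparison of dimensions via the Bockstein. The only step that deserves a sanity check, and which I do not expect to be a problem, is that the entries of Table~\ref{tab:my_label2} are genuinely independent of the choice of $K$ in the allowed characteristic range. This is implicit in the proof of the preceding theorem, whose case-by-case analysis cuts out the invariant subspaces by integer linear equations whose elementary divisors involve only the primes $2$ and $3$, so the invariant-space dimensions are the same in all characteristics $\neq 2, 3$.
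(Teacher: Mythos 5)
Your proof is correct and is essentially the paper's own argument: the paper also compares $\dim_{\Q}H^2(H',\Q)$ with $\dim_{\mathbb{F}_p}H^2(H',\mathbb{F}_p)$ using the theorem's validity over any field of characteristic $\neq 2,3$, and counts the $\Z/p$ factors of $H^2(H',\Z/p)$ via the universal-coefficient decomposition, which is exactly the content of your Bockstein short exact sequence $0 \to H^2(H',\Z)/p \to H^2(H',\mathbb{F}_p) \to H^3(H',\Z)[p] \to 0$. Your sanity check that the invariant-subspace computations are characteristic-independent is the right point to flag, and it is indeed built into the statement of the preceding theorem.
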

\begin{proof}
The group $H^2(H', \Z/p)$ has the form $(\Z/p)^a$, for some $a \in \mathbb{N}$. In contains a factor of $\Z/p$ for each (i) factor of $\Z$ in $H^2(H', \Z)$, (ii) factor of $\Z/p^k$ in $H^2(H', \Z)$ ($k \geq 1$) and (iii) each factor of $\Z/p^k$ in $H^3(H',\Z)$. The previous theorem establishes that $\text{dim}_{\Q}(H^2(H', \Q)) = \text{dim}_{\Z/p}(H^2(H', \Z/p))$ for $p > 3$. Hence, there are as many $\Z/p$ factors in $H^2(H', \Z/p)$ as there are factors of $\Z$ in $H^2(H',\Z)$. In particular, there is no $p$ torsion in $H^2(H', \Z)$. 
\end{proof}

\begin{cor}
The group $\text{Pic}(M_X)$, for $X$ such that $h^{2,1} = 3$, is isomorphic to $\Z^a \times A$, where $a$ is given in Table 2 above as $\text{dim}_K(H^2(H',K))$ for each case and $A$ is some (case dependent) finite abelian group of order $2^n3^m$, $n,m \in \mathbb{N}$. In particular, the Picard group of $M_X$ is infinite in 9 out of the 10 cases.
\end{cor}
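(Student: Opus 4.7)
The plan is essentially to assemble the preceding results. By Theorem 6.5 the Picard group is identified with the integral group cohomology $H^2(H',\Z)$, and by Lemma 6.6 this group is finitely generated abelian. The structure theorem for finitely generated abelian groups then gives a (non-canonical) decomposition $\text{Pic}(M_X)\cong \Z^a \oplus A$ with $A$ a finite abelian group, so the content of the corollary is to (i) identify the rank $a$ with the number in Table 2 and (ii) control the primes dividing $|A|$.

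For the rank, I would apply the universal coefficient theorem: tensoring with $\Q$ kills the torsion, so
\[
a \;=\; \dim_{\Q} \bigl(H^2(H',\Z)\otimes_{\Z}\Q\bigr) \;=\; \dim_{\Q} H^2(H',\Q).
\]
Since $\text{char}(\Q)=0$ is not $2$ or $3$, Theorem 6.10 applies with $K=\Q$ and reads off the rank from Table 2 case by case; in particular $a=0$ exactly in case (0-1) and $a\geq 1$ in the other nine cases, giving the final ``infinite in 9 out of 10'' assertion.

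For the torsion subgroup $A$, I would invoke the preceding Corollary 6.11, which asserts $H^2(H',\Z)$ has no $p$-torsion for any prime $p>3$. Combined with the decomposition above, this forces $|A|$ to be of the form $2^n 3^m$ for some $n,m\in\mathbb{N}$, as claimed. There is no hard step here, since the heavy lifting — finite generation, the explicit rank computation, and the exclusion of large-prime torsion — has already been done in Lemma 6.6, Theorem 6.10, and Corollary 6.11 respectively; the present corollary is purely a bookkeeping consequence that packages these three inputs through Theorem 6.5.
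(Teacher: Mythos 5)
Your proposal is correct and follows exactly the route the paper intends: the corollary is stated without a separate proof precisely because it is the assembly of Theorem 6.5 ($\text{Pic}(M_X)\cong H^2(H',\Z)$), Lemma 6.6 (finite generation), Theorem 6.10 with $K=\Q$ (the rank, via universal coefficients), and Corollary 6.11 (no $p$-torsion for $p>3$). Your bookkeeping, including reading off $a=0$ only in case (0-1), matches the paper.
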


\subsection{Hodge Bundles} In this section, we will give two proofs that the Hodge bundle over $M_X$ has finite order. The first proof is as follows.

\begin{thm}
Let $\lambda$ denote the Hodge bundle over $M_X$, for $h^{2,1}(X) = 3$. Then $\lambda$ is nontrivial, and has finite order. 
\end{thm}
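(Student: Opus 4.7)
The plan is to realize the Hodge bundle $\lambda$ on $M_X \cong [\mathbb{H}^3/H']$ concretely via an automorphy factor, then verify nontriviality by exhibiting a universal stabilizer element acting by $-1$ on every fiber, and verify finite order by building an explicit trivialization of $\lambda^{\otimes 12}$ from the classical modular discriminant.

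First I would observe that the universal family $f^*\X = \mathcal{U}^3/G$ carries the holomorphic $(3,0)$-form $\omega := dz_1 \wedge dz_2 \wedge dz_3$, which descends from $\mathcal{U}^3$ to the quotient because translations in $G_S$ preserve each $dz_i$ and every nontrivial twist in $G_T$ inverts signs on exactly two of the three factors. Thus $\omega$ gives a canonical trivialization of the pullback of $\lambda$ to $\mathbb{H}^3$, and for $h = (\epsilon, \gamma, \sigma) \in H'$ with $\gamma_i = \bigl(\begin{smallmatrix} a_i & b_i \\ c_i & d_i \end{smallmatrix}\bigr)$ a direct computation gives the automorphy factor
\[
j(h, \tau) \;=\; \mathrm{sgn}(\sigma) \prod_{i=1}^3 (c_i \tau_{\sigma^{-1}(i)} + d_i)^{-1},
\]
with the translations $\epsilon \in (\Z/4)^6$ acting trivially on $\omega$.

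For nontriviality, the plan is to produce a stabilizer element acting by $-1$ on every fiber of $\lambda$. The candidate is $h_0 := (0, (-I, -I, -I), 1) \in A^{(3)}$. Since $-I$ is central in $\Gamma$, conjugation by $h_0$ merely negates the translation component of any element of $A^{(3)}$; in particular $h_0$ normalizes $G$ and so lies in $H$. But $h_0 \notin G$, because its twist component inverts all three factors whereas every nontrivial element of $G_T$ inverts exactly two. Since $-I$ acts trivially on $\mathbb{H}$, the image of $h_0$ in $H'$ lies in the stabilizer of every point of $[\mathbb{H}^3/H']$, and the formula above gives $j(h_0, \tau) = (-1)^3 = -1$. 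A trivial line bundle has trivial stabilizer action on its fibers, so $\lambda$ is nontrivial.

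For finite order, I would build an invariant section of $\lambda^{\otimes 12}$ from the modular discriminant $\Delta(\tau) = \eta(\tau)^{24}$, a weight-$12$ cusp form for $\Gamma = \mathrm{SL}(2,\Z)$ that is nowhere zero on $\mathbb{H}$ and has trivial multiplier, satisfying $\Delta(\gamma\tau) = (c\tau+d)^{12} \Delta(\tau)$. Setting
\[
\Phi(\tau_1, \tau_2, \tau_3) := \Delta(\tau_1) \, \Delta(\tau_2) \, \Delta(\tau_3),
\]
a short check shows $\Phi(h \cdot \tau) = j(h, \tau)^{-12} \Phi(\tau)$ for every $h \in H'$: the $\Gamma^3$ part produces the weight-$12$ factors, the translations leave the $\tau_i$ fixed, $\mathrm{sgn}(\sigma)^{12} = 1$, and the $S_3$ part merely permutes the three symmetric factors of $\Phi$. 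Hence $\Phi \cdot \omega^{\otimes 12}$ descends to a nowhere-vanishing global section of $\lambda^{\otimes 12}$ over $M_X$, proving $\lambda^{\otimes 12} \cong \mathcal{O}_{M_X}$.

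The hard part is not conceptual but bookkeeping: computing $j(h, \tau)$ correctly across the three kinds of generators of $H'$ (translations, $\Gamma^3$, and $S_3$) and verifying that the sign coming from $S_3$ disappears after raising to the twelfth power while the translations really do act trivially on $\omega$. Once the automorphy factor is in hand, both the nontriviality argument via $h_0$ and the trivialization by $\Phi$ are essentially immediate, and as a bonus the K\"ahler potential can be read off as $-\tfrac{1}{12} \log |\Phi|^{2}$ up to a smooth additive constant, realizing the global K\"ahler potential predicted by \cite{KS}.
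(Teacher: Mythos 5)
Your proof is correct. The finite-order half --- descending $\prod_{i=1}^{3}\Delta(\tau_i)\,(dz_1\wedge dz_2\wedge dz_3)^{\otimes 12}$ to a nowhere-vanishing section of $\lambda^{\otimes 12}$ --- is exactly the paper's second argument (its explicit trivialization theorem immediately following this one), including the same three checks on the $(\Z/4)^6$, $\Gamma^3$, and $S_3$ generators of $H'$. The nontriviality half is genuinely different. The paper pulls $\lambda$ back along the finite \'etale cover $M(4)^3\to M_X$ and disposes of both claims at once: finite order follows because the Hodge bundle of $M(4)$ has finite order, and nontriviality is asserted via the parenthetical ``since $\ell^*\lambda$ is nontrivial, so must be $\lambda$.'' That parenthetical is actually the weakest point of the paper's argument, since $\mathbb{H}/\Gamma(4)$ is a smooth affine curve with trivial Picard group, so the pullback of $\lambda$ to $M(4)^3$ is in fact trivial and cannot detect nontriviality downstairs. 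Your route --- exhibiting $h_0=(0,(-I,-I,-I),1)$, checking that it centralizes $G$ (conjugation by $-I$ negates translation components, which are $2$-torsion), that it is not in $G$ (it inverts an odd number of factors while every twist in $G$ inverts an even number), that it fixes all of $\mathbb{H}^3$, and that it scales $dz_1\wedge dz_2\wedge dz_3$ by $(-1)^3=-1$ --- detects nontriviality through the action of the generic stabilizer on the fiber, i.e., through $2$-torsion in $\mathrm{Pic}(M_X)$ that is invisible both to the rational cohomology computation of the paper and to any pullback along a cover killing the stabilizer. It is self-contained, it is immune to the objection above, and it even shows that the order of $\lambda$ is even. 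The automorphy-factor bookkeeping you flag as the main labor is indeed the only place requiring care (the precise indexing of $\tau_{\sigma^{-1}(i)}$ versus $\tau_{\sigma(i)}$ is immaterial to both conclusions, since $\mathrm{sgn}(\sigma)^{12}=1$ and $h_0$ involves no permutation).
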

\begin{proof}
To show that a line bundle $L$ over an analytic stack $\mathcal{X}$ has finite order, it suffices to exhibit a finite \'etale cover $\ell: \mathcal{Y} \rightarrow \mathcal{X}$ such that $\ell^*L$ has finite order. In our case, as we've discussed in the remark prior to Theorem 4.4, there is a finite \'etale cover $\ell: M(4)^3 \rightarrow M_X$. It is easy to see that that pullback $\ell^*\lambda$ is the Hodge bundle over $M(4)^3$. Since the Hodge bundle over $M(4)$ has finite order, the claim follows (and since $\ell^*\lambda$ is nontrivial, so must be $\lambda$). 
\end{proof}

For the second approach, we will explicitly construct a trivializing section of $\lambda^{\otimes 12}$. To do so, we must give a global nonvanishing section of $\pi^*\lambda$ over $\mathbb{H}^3$ which is $H'$ equivariant. To be precise, a line bundle over $[\mathbb{H}^3/H']$ is an $H'$ equivariant line bundle over $\mathbb{H}^3$, \textit{i.e.} a line bundle $L$ over $\mathbb{H}^3$ together with an $H'$ action. In these terms, the definition of the Hodge bundle $\lambda$ over $M_X$ is as follows. The associated line bundle over $\mathbb{H}^3$ is denoted $\pi^*\lambda$, and is by definition equal to the pushforward of the relative canonical bundle $\Omega^3_{\mathcal{U}^3/\mathbb{H}^3}$ to $\mathbb{H}^3$. The $H'$ action on this bundle is inherited from the $H'$ action on $\Omega^3_{\mathcal{U}^3}$, which is obtained by taking the differential of the action of an element $h \in H'$ on $\mathcal{U}^3$. Then to give a trivialization of $\lambda$ is to give a trivialization of $\pi^*\lambda$ which is equivariant with respect to the $H'$ actions on $\mathbb{H}^3$ and $\pi^*\lambda$.\\

So, we will seek a global nonvanishing section of $\pi^*\lambda$ of the form
$$
f(\tau_1,\tau_2,\tau_3) dz_1 \wedge dz_2 \wedge dz_3,
$$
for $f$ a nonvanishing analytic function on $\mathbb{H}^3$. Before we write down such a section, recall that the modular discriminant $\Delta: \mathbb{H}^3 \rightarrow \C$ is defined to be $\eta^{24}$, where $\eta$ is the Dedekind $\eta$ function
$$
\eta(\tau) = q^{\frac{1}{24}} \prod_{n=1}^{\infty} (1 - q^n),
$$
with $q = e^{2\pi i \tau}$ the square of the nome. Note that $\Delta$ is a modular form of weight 12.

\begin{thm} The section $\sigma: \mathbb{H}^3 \rightarrow (\pi^*\lambda)^{\otimes 12}$ defined by
$$
\sigma = \left(\prod_{i=1}^{3} \Delta(\tau_i)\right) (dz_1 \wedge dz_2 \wedge dz_3)^{\otimes 12}
$$
descends to a trivialization of $\lambda^{\otimes 12}$ over $M_X$.
\end{thm}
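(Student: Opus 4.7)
The plan is to observe that $(dz_1\wedge dz_2\wedge dz_3)$ already trivializes $\pi^{*}\lambda$ over $\mathbb{H}^3$ (it is the fiberwise nowhere-vanishing translation-invariant holomorphic volume form on the product of tori), so the only content is to check that the section $\sigma$ of the $12$th power is invariant under the action of $H$ on $\pi^{*}\lambda^{\otimes 12}$. Since $G\subset H$ acts trivially on $\mathbb{H}^3$, $H$-invariance of $\sigma$ on $\mathbb{H}^3$ will automatically give an $H'$-equivariant section, hence a global section of $\lambda^{\otimes 12}$ on $M_X=[\mathbb{H}^3/H']$; and since $\Delta$ is nowhere vanishing on $\mathbb{H}$, the resulting section is a trivialization.

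The verification splits naturally according to the generators of $H_{\max}=(\mathbb{Z}/4)^6\rtimes\Gamma^3\rtimes S_3$ (together with the twist elements that may appear in $G$ or its lifts). For the translation part $(\mathbb{Z}/4)^6$ and for the shifts $G_S$, translation by any lattice point fixes $dz_i$ fiberwise and does not change $\tau_i$, so $\sigma$ is trivially preserved. For the twists (elements of $G_T$, acting modulo translations as $z_i\mapsto -z_i$ on two of the three factors), $dz_1\wedge dz_2\wedge dz_3$ picks up a sign $(-1)^2=1$, and the $\tau_i$'s are unchanged, so $\sigma$ is again invariant.

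The essential calculation is at the $\Gamma^3$ level. Under $\gamma_i=\begin{pmatrix}a&b\\ c&d\end{pmatrix}\in\Gamma$ acting on the $i$-th factor of $\mathcal{U}^3$, the fiberwise relative form transforms as $dz_i\mapsto (c\tau_i+d)^{-1}dz_i$, so
\[
(dz_1\wedge dz_2\wedge dz_3)^{\otimes 12}\ \longmapsto\ \prod_{i=1}^{3}(c_i\tau_i+d_i)^{-12}\,(dz_1\wedge dz_2\wedge dz_3)^{\otimes 12}.
\]
On the other hand, $\Delta=\eta^{24}$ is a modular form of weight $12$, so $\Delta(\gamma_i\tau_i)=(c_i\tau_i+d_i)^{12}\Delta(\tau_i)$, and the two automorphy factors cancel exactly in $\sigma$. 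Finally, for the $S_3$-factor, $\prod_i \Delta(\tau_i)$ is manifestly symmetric, and any sign picked up by permuting $dz_1\wedge dz_2\wedge dz_3$ is killed by the twelfth power, so $\sigma$ is $S_3$-invariant as well.

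Combining these checks shows $\sigma$ is $H$-invariant and nowhere vanishing, hence defines a trivialization of $\lambda^{\otimes 12}$ on $M_X$. There is no real obstacle here beyond bookkeeping; the only mildly delicate point is the sign issue for the twists and permutations, which is precisely why we pass to the $12$th power (any power that is simultaneously a multiple of $2$ and divisible enough to absorb the weight $12$ of $\Delta$ would do, and $12$ is the minimal such power).
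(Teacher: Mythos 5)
Your proof is correct and follows essentially the same route as the paper's: check that the translations act trivially, that the $S_3$-permutation sign is absorbed by the twelfth power, and that the weight-$12$ modularity of $\Delta$ cancels the automorphy factor $(c_i\tau_i+d_i)^{-12}$ coming from $dz_i\mapsto(c_i\tau_i+d_i)^{-1}dz_i$, with nonvanishing of $\Delta$ giving the trivialization. Your explicit check of the twist elements of $G$ is a harmless extra (the paper implicitly absorbs it into the fact that $G$ acts trivially on $f^{*}\X$, so only $H'=H/G$ needs checking).
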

\begin{proof}
To check that $\sigma$ is equivariant under $H'$, it suffices to show that it is equivariant under $\Gamma^3$ and $S_3$ (since  the translations $(\Z/4)^6$ certainly act trivially on $\sigma$). The claim of equivariance under $S_3$ is immediate, since the only effect of a permutation is to introduce a sign coming from the wedge product $dz_1 \wedge dz_2 \wedge dz_3$. Equivariance under $\Gamma^3$ follows from the fact that $\Delta$ is modular of weight 12. Since $\Delta$ is also nonvanishing, this concludes the proof that $\sigma$ defines a trivializing section of $\lambda^{\otimes 12}$.
\end{proof}

We proceed to write down a globally defined K\"ahler potential on $M_X$. We consider the following function defined on $\mathbb{H}^3$
$$
\mathcal{K} = \log{\norm{\eta(\tau_1) \eta(\tau_2)\eta(\tau_3)}^2}.
$$
Clearly, $\mathcal{K}$ is invariant under the action of $S_3$. Furthermore, while $\eta$ is not invariant under $\Gamma$ (i.e. it is not modular), it satisfies $\eta(\gamma \tau) = \epsilon \eta(\tau)$, where $\gamma \in \Gamma$ and $\epsilon$ is some 12th root of unity. Therefore, $\mathcal{K}$ is invariant under the action of $H'$ on $\mathbb{H}^3$. Hence, it is well defined on the moduli space $M_X$. We can then define
$$
\omega = \frac{i}{2} \partial \bar{\partial} \mathcal{K}.
$$
which defines a (1,1) form on $M_X$. Since this form is positive definite, we see that $\omega$ defines a K\"ahler form on $M_X$. $\omega$ is in fact the curvature of the Hodge bundle. Indeed, we may define a $C^\infty$ section of $\lambda$ given by
$$
\widetilde{\sigma} = \norm{\eta(\tau_1) \eta(\tau_2)\eta(\tau_3)}^2 dz_1 \wedge dz_2 \wedge dz_3.
$$
This section is well defined for the same reason that $\mathcal{K}$ is well defined. If $h$ denotes the Weil-Petersson norm of $\widetilde{\sigma}$, then (after potentially rescaling $\widetilde{\sigma}$ by a constant) we have $\mathcal{K} = \log{h}$. Hence, $\omega$ is the curvature of $\lambda$ with respect to the Weil-Petersson metric.

\section{Open problems}

In general, it seems that very little is known about global Calabi-Yau moduli spaces.
Is their Picard group always finitely generated? 
If not, is the Hodge line bundle still of finite order?
Can the Hodge bundle ever be divisible?
Are the coarse moduli spaces always affine?

Clearly, it would be useful to have a global description of more examples.
The cases previously understood are 1-dimensional
(for example the mirror of the quintic threefold) 
or 2-dimensional. 
What about general toric hypersurfaces and complete intersections?
An obvious starting point might be the quintic threefold itself.
However, the large symmetry groups present (and the large dimension) 
may make this case particularly dificult.

At the opposite extreme, one might prefer to consider 
Calabi-Yau hypersurfaces in particularly ugly (or: random) toric varieties, 
ones whose only symmetries come from the torus action. 
In that case, one can hope to 
write down a normal form and get a global description
of the moduli space.  An example of such a normal form is in \cite{CDLW}.
These authors describe a two-dimensional moduli space of lattice-polarized K3s, 
which are compactifications of the Inose family.
These can in turn be described also as hypersurfaces in 
weighted projective space $W{\mathbb P}(5,6,22,33)$.

More generally, it may be possible to describe the global geometry
and Picard groups of the analogous moduli spaces for Borcea-Voisin
CYs, using the fact that moduli spaces of complex structures of
K3s and lattice-polarized K3s are locally homogeneous spaces.
In particular, one should be able to do this with the family in
\cite{CDLW} to get another three-dimensional Calabi-Yau moduli space.

A question closer to our actual results concerns the components of $M_{\overline{X}}$ 
and their Picard groups.  
We have seen that $M_{\overline{X}}$ has finitely generated Picard group; what about the non-central components? Likewise, is there a more direct way to compute $\text{Pic}\left(M_{\overline{X}} \right)$ that could obtain the 2 and 3 torsion subgroups?

\section{Acknowledgements}

We would like to thank P.~Aspinwall, B.~Conrad, C.~Doran,
R.~Hain, Z.~Komargodsky, D.~Morrison, R.~Plesser,
and N.~Seiberg for useful conversations, 
Patrick Vaudrevange for drawing our attention to the work \cite{FRTV13} and explaining it to us,
and Sheldon Katz and David Treumann for helpful comments on the manuscript.  
During the preparation of this work,
Ron Donagi was supported in part by 
NSF grant DMS 1603526 and by Simons Foundation grant \# 390287.
Eric Sharpe was supported in part by
NSF grant PHY-1417410.

\appendix

\section{Summary of results from \cite{DW} }

In this section, in Table 3, we summarize the results we need 
from  \cite{DW}, listing group actions and pertinent properties
of crepant resolutions of the resulting quotient.

As in \cite{DW},  
a group action labelled (r-n) refers to an action of the
group $({\mathbb Z}/2)^{r+2}$, where $G_T \cong ({\mathbb Z}/2)^2$
and $G_S \cong ({\mathbb Z}/2)^r$.  The number $n$ merely indexes
different actions of the same group.
We take the periods of the $i$-th elliptic curve to be $1, \tau_i$.
(In \cite{DW} the periods were doubled to $2, 2\tau_i$
in order to avoid halves in the half-periods.
In our current notation, the half-periods are $1/2, {\tau_i}/2$.)
A symbol such as $0 \pm$, $1 \pm$, or $\tau \pm$ denotes 
a reflection plus a translation by a half period on one elliptic factor.
Explicitly, $0\pm$ indicates that the generator acts as
\begin{displaymath}
z \: \mapsto \: \pm z,
\end{displaymath}
$1 \pm$ indicates that the generator acts as
\begin{displaymath}
z \: \mapsto \: \pm z + 1/2,
\end{displaymath}
and $\tau \pm$ indicates that the generator acts as
\begin{displaymath}
z \: \mapsto \: \pm z + \tau/2.
\end{displaymath}

An element of the twist group $G_T$ is denoted by a triple of such symbols 
(with an even number of negative signs.)
The entries in the shift subgroup $G_S$ are pure translations by (half of) the
indicated amount; we drop the unneeded $\pm$.  For example, $(\tau,\tau,0)$ indicates that the
generator acts as
\begin{displaymath}
z_1 \mapsto z_1 + \tau/2, \: \: \:
z_2 \mapsto z_2 + \tau/2, \: \: \:
z_3 \mapsto z_3.
\end{displaymath}

We also list the Hodge numbers of a crepant resolution, as well as 
the fundamental group $\pi_1$ of the same.  Possible fundamental groups are denoted as 
follows, in the same notation as \cite{DW}[Table 1]:
\begin{center}
\begin{tabular}{cl}
$A$: & the extension of ${\mathbb Z}/2$ by ${\mathbb Z}^2$, \\
$B$: & any extension of $({\mathbb Z}/2)^2$ by ${\mathbb Z}^6$, \\
$C$: & ${\mathbb Z}/2$, \\
$D$: & $({\mathbb Z}/2)^2$.
\end{tabular}
\end{center}

\vskip .5cm

The available information is displayed in two tables in \cite{DW}. A complete list, including 36 types, is given in Table 1. It is still not known exactly which pairs from that list may coincide. The possible coincidences were summarized in a second table (on page 17 of  \cite{DW}),  listing all undistinguished cases that may or may not turn out to coincide: this list consisted of seven pairs and one triple of items from Table 1 of  \cite{DW}. A subsequent work \cite{FRTV13} used a computer search to carefully analyze all those equivalences of distinct entries in Table 1 of  \cite{DW} that happen to be induced by affine linear transformations (twists and shifts) of the product of 3 tori. That work confirmed the completeness of the list in Table 1 of  \cite{DW}, and showed that precisely one pair of the previously undistinguished cases (items (3-1) vs (3-2)) coincided under such an equivalence. In the table below, we have therefore deleted entry (3-2). As far as we know, no progress has been made concerning the status of the other possible coincidences listed in the table on page 17 of  \cite{DW}: we have no information regarding the possible existence of exotic isomorphisms that are not induced by twists and shifts of the product of tori. We are grateful to Patrick Vaudrevange for drawing our attention to the work \cite{FRTV13} and explaining it to us.

\nopagebreak
\begin{table}[ht] 
\centering
\begin{tabular}{c | c c | c c}  
Label & $G_T$ & $G_S$ & $(h^{1,1}, h^{2,1})$ & $\pi_1$ \\ \hline
(0-1) & $(0+,0-,0-), (0-,0+,0-)$ & &  $(51,3)$ & $0$ \\
(0-2) & $(0+,0-,0-), (0-,0+,1-)$ & & $(19,19)$ & $0$ \\
(0-3) & $(0+,0-,0-), (0-,1+,1-)$ & & $(11,11)$ & $A$ \\
(0-4) & $(1+,0-,0-), (0-,1+,1-)$ & & $(3,3)$ & $B$ \\ \hline
(1-1) & $(0+,0-,0-), (0-,0+,0-)$ & $(\tau,\tau,\tau)$ & $(27,3)$ & $C$ \\
(1-2) & $(0+,0-,0-), (0-,0+,\tau-)$ & $(\tau,\tau,\tau)$ & $(15,15)$ & $0$ \\
(1-3) & $(0+,0-,0-), (0-,0+,1-)$ & $(\tau,\tau,\tau)$ & $(11,11)$ & $C$ \\
(1-4) & $(0+,0-,0-), (0-,1+,1-)$ & $(\tau,\tau,\tau)$ & $(7,7)$ & $A$ \\
(1-5) & $(1+,0-,0-), (0-,1+,1-)$ & $(\tau,\tau,\tau)$ & $(3,3)$ & $B$ \\
(1-6) & $(0+,0-,0-), (0-,0+,0-)$ & $(\tau,\tau,0)$ & $(31,7)$ & $0$ \\
(1-7) & $(0+,0-,0-), (0-,0+,1-)$ & $(\tau,\tau,0)$ & $(11,11)$ & $C$ \\
(1-8) & $(0+,0-,0-), (0-,1+,0-)$ & $(\tau,\tau,0)$ & $(15,15)$ & $0$ \\
(1-9) & $(0+,0-,0-), (0-,1+,1-)$ & $(\tau,\tau,0)$ & $(7,7)$ & $A$ \\
(1-10) & $(1+,0-,0-), (0-,1+,0-)$ & $(\tau,\tau,0)$ & $(11,11)$ & $A$ \\
(1-11) & $(1+,0-,0-), (0-,1+,1-)$ & $(\tau,\tau,0)$ & $(3,3)$ & $B$ \\ \hline
(2-1) & $(0+,0-,0-), (0-,0+,0-)$ & $(1,1,1), (\tau,\tau,\tau)$ &
$(15,3)$ & $D$ \\
(2-2) & $(0+,0-,0-), (0-,0+,1-)$ & $(1,1,1), (\tau,\tau,\tau)$ &
$(9,9)$ & $C$ \\
(2-3) & $(0+,0-,0-), (0-,0+,0-)$ & $(1,1,1), (\tau,\tau,0)$ & $(17,5)$ & $C$ \\
(2-4) & $(0+,0-,0-), (0-,0+,1-)$ & $(1,1,1), (\tau,\tau,0)$ & $(11,11)$ & $0$ \\
(2-5) & $(0+,0-,0-), (0-,0+,\tau-)$ & $(1,1,1), (\tau,\tau,0)$ & $(7,7)$ & $D$ \\
(2-6) & $(0+,0-,0-), (0-,0+,0-)$ & $(1,1,1), (\tau,1,0)$ & $(19,7)$ & $0$ \\
(2-7) & $(0+,0-,0-), (0-,0+,\tau-)$ & $(1,1,1), (\tau,1,0)$ & $(9,9)$ & $C$ \\
(2-8) & $(0+,0-,0-), (0-,\tau+,\tau-)$ & $(1,1,1), (\tau,1,0)$ & $(5,5)$ & $A$ \\
(2-9) & $(0+,0-,0-), (0-,0+,0-)$ & $(0,1,1), (1,0,1)$ & $(27,3)$ & $0$ \\
(2-10) & $(0+,0-,0-), (0-,0+,\tau-)$ & $(0,1,1), (1,0,1)$ & $(11,11)$ & $0$ \\
(2-11) & $(0+,0-,0-), (0-,\tau+,\tau-)$ & $(0,1,1), (1,0,1)$ & $(7,7)$ & $A$ \\
(2-12) & $(\tau+,0-,0-), (0-,\tau+,\tau-)$ & $(0,1,1), (1,0,1)$ & $(3,3)$ & $B$ \\
(2-13) & $(0+,0-,0-), (0-,0+,0-)$ & $(1,1,0), (\tau,\tau,0)$ & $(21,9)$ & $0$ \\
(2-14) & $(0+,0-,0-), (0-,0+,1-)$ & $(1,1,0), (\tau,\tau,0)$ & $(7,7)$ & $D$ \\
\hline
(3-1) & $(0+,0-,0-), (0-,0+,0-)$ & $(0,\tau,1), (\tau,1,0), (1,0,\tau)$ &
$(12,6)$ & $0$ \\
(3-3) & $(0+,0-,0-), (0-,0+,0-)$ & $(1,1,0), (\tau,\tau,0), (1,\tau,1)$ &
$(17,5)$ & $0$ \\
(3-4) & $(0+,0-,0-), (0-,0+,\tau-)$ & $(1,1,0), (\tau,\tau,0), (1,\tau,1)$ &
$(7,7)$ & $C$ \\
(3-5) & $(0+,0-,0-), (0-,0+,0-)$ & $(0,1,1), (1,0,1), (\tau,\tau,\tau)$ &
$(15,3)$ & $C$ \\
(3-6) & $(0+,0-,0-), (0-,0+,\tau-)$ & $(0,1,1), (1,0,1), (\tau,\tau,\tau)$ &
$(9,9)$ & $0$ \\ \hline
(4-1) & $(0+,0-,0-), (0-,0+,0-)$ & $(0,\tau,1), (\tau,1,0), (1,0,\tau),
(1,1,1)$ & $(15,3)$ & $0$
\end{tabular}
\vskip .5cm

\caption{Summary of Table 1 of \cite{DW}.}  \label{Table3}
\label{table:summary-dw}
\end{table}

\newpage

\end{document}